\newtheorem{The}{Theorem}[section]
\newtheorem{Def}{Definition}[section]
\newtheorem{lem}{Lemma}[section]
\newtheorem{Ex}{Example}[section]
\begin{document}
\begin{center}
{\LARGE {\bf Analysis of solution trajectories of linear fractional order systems }}
\vskip 1cm
{\Large   Madhuri Patil, Sachin Bhalekar}\\
\textit{Department of Mathematics, Shivaji University, Kolhapur - 416004, India, Email:madhuripatil4246@gmail.com (Madhuri Patil),sachin.math@yahoo.co.in, sbb\_maths@unishivaji.ac.in (Sachin Bhalekar), }\\
\end{center}
\begin{abstract}
 The behavior of solution trajectories usually changes if we replace the classical derivative in a system by a fractional one. In this article, we throw a light on the relation between two trajectories $X(t)$ and $Y(t)$ of such a system, where the initial point $Y(0)$ is at some point $X(t_1)$ of trajectory $X(t)$. In contrast with classical systems,  trajectories $X$ and $Y$ do not follow the same path. Further, we provide a Frenet apparatus of both trajectories in various cases and discuss their effect.
\end{abstract}
\vskip 0.5cm
\noindent
{\bf Keywords}: Fractional derivative, Mittag-Leffler functions, Orthogonal transformation, Frenet apparatus.

\section{Introduction}
\par In the recent past, fractional differential equations (FDE) became a popular topic among the researchers working in pure as well as applied Mathematics. Applications of FDEs are found in various fields ranging from Physics to Biology. We suggest some selected references \cite{Mainardi,Kulish,Fellah,Matusu,El-Saka,Yuan,Goulart,Sebaa,Magin} on applications of FDEs to the readers.
\par Mathematical analysis of FDEs is also an interesting and equally important topic of research. Existence and uniqueness \cite{Delbosco, Diethelm, Gejji1, Wei}, stability \cite{Matignon1,Matignon2,Moze,Deng1,Deng2,Qian,Agarwal} and positivity \cite{Zhang,Gejji2,Gejji3,Bai,Goodrich,Baleanu} of these equations is studied by the researchers in details. Fractional order versions of stable manifold theorem are discussed in \cite{Cong1,Deshpande,Cong2}. Since FDEs are generalizations of classical differential dynamical systems, we cannot expect the same properties from these models as the classical ones.
\par In \cite{S. Bhalekar}, we have shown that the planar linear FDE system  ${}_0^C\mathrm{D}_t^\alpha X=AX$ may produce self-intersecting trajectories. Such singular points are not shown by their classical counterparts. We continue our investigations in the present manuscript and discuss the trajectories of FDE systems whose initial point is on a different trajectory of the same system.

\section{Preliminaries}
This section contains basic definitions and results given in the literature.  
\begin{Def}\cite{Podlubny} \label{Def 3,2.1}
Let $\alpha\ge0$ \,\, ($\alpha\in\mathbb{R}$). Then Riemann-Liouville (\text RL) fractional integral of function $f\in C[0,b]$, $t>0$ of order `$\alpha$' is defined as,
\begin{equation}
{}_0\mathrm{I}_t^\alpha f(t)=
\frac{1}{\Gamma{(\alpha)}}\int_0^t (t-\tau)^{\alpha-1}f(\tau)\,\mathrm{d}\tau. \label{3.1}
\end{equation}
\end{Def}
\begin{Def}\cite{Podlubny}\label{Def 3,2.2}
The Caputo fractional derivative of order $\alpha>0$, $n-1<\alpha< n$, $n\in \mathbb{N}$ is defined for $f\in C^n[0,b]$,\, $t>0$ as,
\begin{equation}
{}_0^{C}\mathrm{D}_t^\alpha f(t)=
\begin{cases}
\frac{1}{\Gamma{(n-\alpha)}}\int_0^t (t-\tau)^{n-\alpha-1}f^{(n)}(\tau)\,\mathrm{d}\tau & \mathrm{if}\,\, n-1<\alpha< n\\
\frac{d^n}{dt^n}f(t) & \mathrm{if}\,\, \alpha=n.
\end{cases}\label{3.2}
\end{equation}
Note that ${}_0^{C}\mathrm{D}_t^\alpha c=0$, where $c$ is a constant.
\end{Def}
\begin{Def}\cite{Podlubny} \label{Def 3,2.3}
The one-parameter Mittag-Leffler function is defined as,
\begin{equation}
E_\alpha(z)=\sum_{k=0}^\infty \frac{z^k}{\Gamma(\alpha k+1)}\, ,\qquad z\in\mathbb{C}, \,\,(\alpha>0).\label{3.3}
\end{equation}
The two-parameter Mittag-Leffler function is defined as,
\begin{equation}
E_{\alpha,\beta}(z)=\sum_{k=0}^\infty \frac{z^k}{\Gamma(\alpha k+\beta)}\, ,\qquad z\in\mathbb{C}, \,\,(\alpha>0,\,\beta>0).\label{3.4}
\end{equation}
\end{Def}
\begin{Def}\cite{O'Neill}\label{def 3,2.4}
Let $\alpha: I \rightarrow \mathbb{R}^n$ \,be a curve. The speed of $\alpha$ is defined as \begin{equation}
\nu(t) =\parallel \alpha'(t) \parallel. \label{3.5}
\end{equation}
\end{Def}
\begin{Def}\cite{O'Neill}\label{def 3,2.5}
An isometry of \,$\mathbb{R}^n$ is a mapping $F:\mathbb{R}^n\rightarrow\mathbb{R}^n$ such that 
\begin{equation}
d(F(p),F(q))=d(p,q) \label{3.6}
\end{equation}
for all points $p, q$ in $\mathbb{R}^n$. d(x,y) is Euclidean distance.
\end{Def}
\begin{Def}\cite{O'Neill}\label{def 3,2.6}
Two curves  $\alpha, \beta:I\rightarrow\mathbb{R}^n$ are congruent provided there exists an isometry $F$ of\, $\mathbb{R}^n$ such that $\beta=F(\alpha)$; that is, $\beta(t)=F(\alpha(t))$ for all $t$ in $I$. 
\end{Def}
\begin{Def}\cite{O'Neill}\label{def 3,2.7}
A transformation $C:\mathbb{R}^n\rightarrow \mathbb{R}^n$ is an Orthogonal transformation if it preserves dot products in the sense that 
\begin{equation}
C(p)\cdot C(q)=p\cdot q \qquad for\, all\,\, p,q. \label{3.7}
\end{equation}
Every orthogonal transformation is an isometry.
\end{Def}

\begin{The} \label{Thm 3,2.1}
\cite{Luchko} Solution of homogeneous system of  fractional order differential equation
\begin{equation}
{}_0^C\mathrm{D}_t^\alpha X(t)=A X(t), \qquad 0<\alpha<1 \label{3.8}
\end{equation} 
(where $A$ is $n\times n$ matrix) is given by
\begin{equation}
X(t)=E_\alpha(A t^\alpha)X(0),\label{3.9}
\end{equation}
 where $E_\alpha(A t^\alpha)$ is matrix variate Mittag-Leffler function.
\end{The}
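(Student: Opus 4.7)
My plan is to establish the solution formula by two complementary routes: a direct series verification and a Laplace-transform derivation. The direct route makes the role of the Mittag-Leffler function most transparent, while the Laplace-transform route simultaneously gives uniqueness, so I would present them in that order.

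First I would take the candidate $X(t)=E_\alpha(At^\alpha)X(0)=\sum_{k=0}^{\infty}\frac{A^k t^{\alpha k}}{\Gamma(\alpha k+1)}X(0)$ and check it against (3.8). The initial condition is immediate since $E_\alpha(0)=I$. To verify the differential equation, I would apply ${}_0^{C}\mathrm{D}_t^\alpha$ term by term. Because ${}_0^{C}\mathrm{D}_t^\alpha c=0$ and, for $k\geq 1$, ${}_0^{C}\mathrm{D}_t^\alpha t^{\alpha k}=\frac{\Gamma(\alpha k+1)}{\Gamma(\alpha(k-1)+1)}t^{\alpha(k-1)}$, the series reduces to $\sum_{k=1}^{\infty}\frac{A^k t^{\alpha(k-1)}}{\Gamma(\alpha(k-1)+1)}X(0)=A\,E_\alpha(At^\alpha)X(0)$, which is exactly $AX(t)$.

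The main technical obstacle is justifying the termwise Caputo differentiation of the matrix-variate series. I would handle this by using a submultiplicative matrix norm, bounding $\|A^k\|\leq\|A\|^k$, and dominating the entries of the series by the scalar entire function $E_\alpha(\|A\|t^\alpha)$. This gives uniform convergence on each compact subinterval of $[0,b]$, which in turn licenses exchanging the summation with the integral defining ${}_0^{C}\mathrm{D}_t^\alpha$ in Definition \ref{Def 3,2.2} via dominated convergence against the weight $(t-\tau)^{-\alpha}$.

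To obtain uniqueness and an independent derivation, I would next apply the Laplace transform to (3.8). Using the standard identity $\mathcal{L}\{{}_0^{C}\mathrm{D}_t^\alpha f\}(s)=s^\alpha F(s)-s^{\alpha-1}f(0)$ componentwise, the system becomes $(s^\alpha I-A)\widetilde{X}(s)=s^{\alpha-1}X(0)$, so that $\widetilde{X}(s)=s^{\alpha-1}(s^\alpha I-A)^{-1}X(0)$ whenever $|s|^\alpha>\|A\|$. Expanding the resolvent as a Neumann series $\sum_{k=0}^{\infty}A^k s^{-\alpha(k+1)}$ and inverting termwise using $\mathcal{L}^{-1}\{s^{-\alpha k-1}\}(t)=t^{\alpha k}/\Gamma(\alpha k+1)$ yields $X(t)=E_\alpha(At^\alpha)X(0)$, matching the formula in (3.9). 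Since $\widetilde{X}(s)$ is uniquely determined, so is $X(t)$, which completes the proof.
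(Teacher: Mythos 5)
The paper does not prove this theorem at all: it is imported verbatim from the cited reference of Luchko and Gorenflo, so there is no in-paper argument to compare against. Your proposal is correct and self-contained, and your second route (Laplace transform of the Caputo derivative, resolvent expansion $\bigl(s^{\alpha}I-A\bigr)^{-1}=\sum_{k\ge 0}A^{k}s^{-\alpha(k+1)}$ for $|s|^{\alpha}>\|A\|$, termwise inversion) is essentially the operational method used in that reference, while the direct series verification is a useful elementary supplement that the source does not emphasize. Two small caveats are worth flagging. First, the candidate solution $E_{\alpha}(At^{\alpha})X(0)$ is not in $C^{1}[0,b]$ for $0<\alpha<1$ (the $k=1$ term contributes $t^{\alpha}$, whose derivative blows up at $t=0$), so it falls outside the class for which Definition \ref{Def 3,2.2} is literally stated; the Caputo derivative still exists because $\int_{0}^{t}(t-\tau)^{-\alpha}\tau^{\alpha-1}\,d\tau$ converges as a Beta integral, but your dominated-convergence justification should be phrased against this weakly singular integrand near $\tau=0$ rather than against a bounded weight. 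Second, the uniqueness you extract from the Laplace transform holds only within the class of Laplace-transformable (e.g.\ exponentially bounded, absolutely continuous) functions; full uniqueness for the initial value problem is usually obtained instead from the equivalent Volterra integral equation $X(t)=X(0)+\frac{1}{\Gamma(\alpha)}\int_{0}^{t}(t-\tau)^{\alpha-1}AX(\tau)\,d\tau$ and a Gronwall or contraction argument. Neither point invalidates your proof; both are routine to repair.
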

\begin{The}\cite{O'Neill}\label{Thm 3,2.2}
For planar regular curve $\alpha:I\rightarrow\mathbb{R}^2$ given by $\alpha(t)=(x(t),y(t)),\, t\in I$, the Frenet apparatus is given by
\begin{equation}
\begin{split}
T & = \frac{(\dot{x},\dot{y})}{\sqrt{(\dot{x})^2+(\dot{y})^2}}\\
N & = \frac{(-\dot{y},\dot{x})}{\sqrt{(\dot{x})^2+(\dot{y})^2}} \\
 k & = \frac{\mathrm{det}\begin{vmatrix}
 \dot{x} & \dot{y}\\
 \ddot{x} & \ddot{y}
 \end{vmatrix}}{((\dot{x})^2+(\dot{y})^2)^{3/2}}.
\end{split} \label{3.10}
\end{equation}
\end{The}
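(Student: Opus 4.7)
The plan is to treat this as a standard derivation of the planar Frenet frame, working directly from the definitions: unit tangent, principal normal as a $90^\circ$ rotation of the tangent, and curvature via the Frenet equation. Regularity of $\alpha$ (i.e.\ $\nu(t)\neq 0$) is used throughout so that divisions are legitimate.

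First, I would obtain the formula for $T$ directly from Definition \ref{def 3,2.4}. The unit tangent is $T(t) = \alpha'(t)/\|\alpha'(t)\|$, which with $\alpha'(t) = (\dot x,\dot y)$ and $\nu(t)=\sqrt{\dot x^2+\dot y^2}$ immediately yields the first formula. Next, since in $\mathbb{R}^2$ the principal normal is the unique unit vector orthogonal to $T$ obtained by a fixed (counter-clockwise) $\pi/2$ rotation, I would apply the rotation matrix $\bigl(\begin{smallmatrix} 0 & -1\\ 1 & 0\end{smallmatrix}\bigr)$ to $T$ to deduce $N = (-\dot y,\dot x)/\sqrt{\dot x^2+\dot y^2}$. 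One can verify $T\cdot N = 0$ and $\|N\|=1$ as a sanity check.

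For the curvature, I would invoke the arc-length Frenet relation $dT/ds = kN$. Via the chain rule $dT/dt = (dT/ds)\,(ds/dt) = \nu\, k\, N$, so that
\begin{equation*}
k = \frac{1}{\nu}\,\frac{dT}{dt}\cdot N.
\end{equation*}
Then I would differentiate $T = (\dot x,\dot y)/\nu$ by the quotient rule, producing a term proportional to $(\ddot x,\ddot y)$ and a term proportional to $(\dot x,\dot y)$ multiplied by $\dot\nu/\nu^2$. Taking the dot product with $N = (-\dot y,\dot x)/\nu$ kills the $(\dot x,\dot y)$ component because $(\dot x,\dot y)\cdot(-\dot y,\dot x)=0$, leaving only $\bigl(\ddot x(-\dot y)+\ddot y\,\dot x\bigr)/\nu^2 = (\dot x\ddot y - \dot y\ddot x)/\nu^2$. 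Dividing once more by $\nu$ produces the stated formula with $\nu^3 = (\dot x^2+\dot y^2)^{3/2}$ in the denominator.

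The main obstacle, if any, is purely bookkeeping: the derivative of $T$ has two terms and one must carefully verify that the tangential component is annihilated by the inner product with $N$ so that the final expression collapses to the clean determinant form. Apart from that algebraic simplification, everything follows from Definitions \ref{def 3,2.4}--\ref{def 3,2.7} and the standard arc-length characterization of curvature.
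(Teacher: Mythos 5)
Your derivation is correct: the tangent and normal formulas follow directly from the definition of speed and the counter-clockwise $\pi/2$ rotation, and the curvature computation via $k=\tfrac{1}{\nu}\,\tfrac{dT}{dt}\cdot N$ correctly collapses to $(\dot x\ddot y-\dot y\ddot x)/(\dot x^2+\dot y^2)^{3/2}$ because the tangential term is annihilated by $N$. The paper itself gives no proof of this statement --- it is quoted as a preliminary from O'Neill --- and your argument is precisely the standard textbook derivation that the citation points to, so there is nothing to reconcile.
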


\section{Observations}
We have following observations.\\
{\bf (1)} Consider the system 
\begin{equation}
\dot{X}(t)=
\begin{bmatrix}
-2 & 4\\
-4 & -2
\end{bmatrix}
X(t). \label{3.11}
\end{equation}
Solution of the linear system (\ref{3.11}) with initial condition $X(0)=[1, 1]^T$ is given in the Figure \ref{Fig 1} and it is shown by a blue line. 
\par Now, consider the same system (\ref{3.11}) with initial condition $X(0)=[e^{-1}(\cos2 + \sin2), e^{-1}(-\sin2+\cos2)]^T$ on the original trajectory,  discussed above. Solution of this system is shown in the same figure by a red line.\\
It can be observed that both the trajectories follow the same path.
\begin{figure}[h]
\begin{center}
          \includegraphics[width=0.5\textwidth]{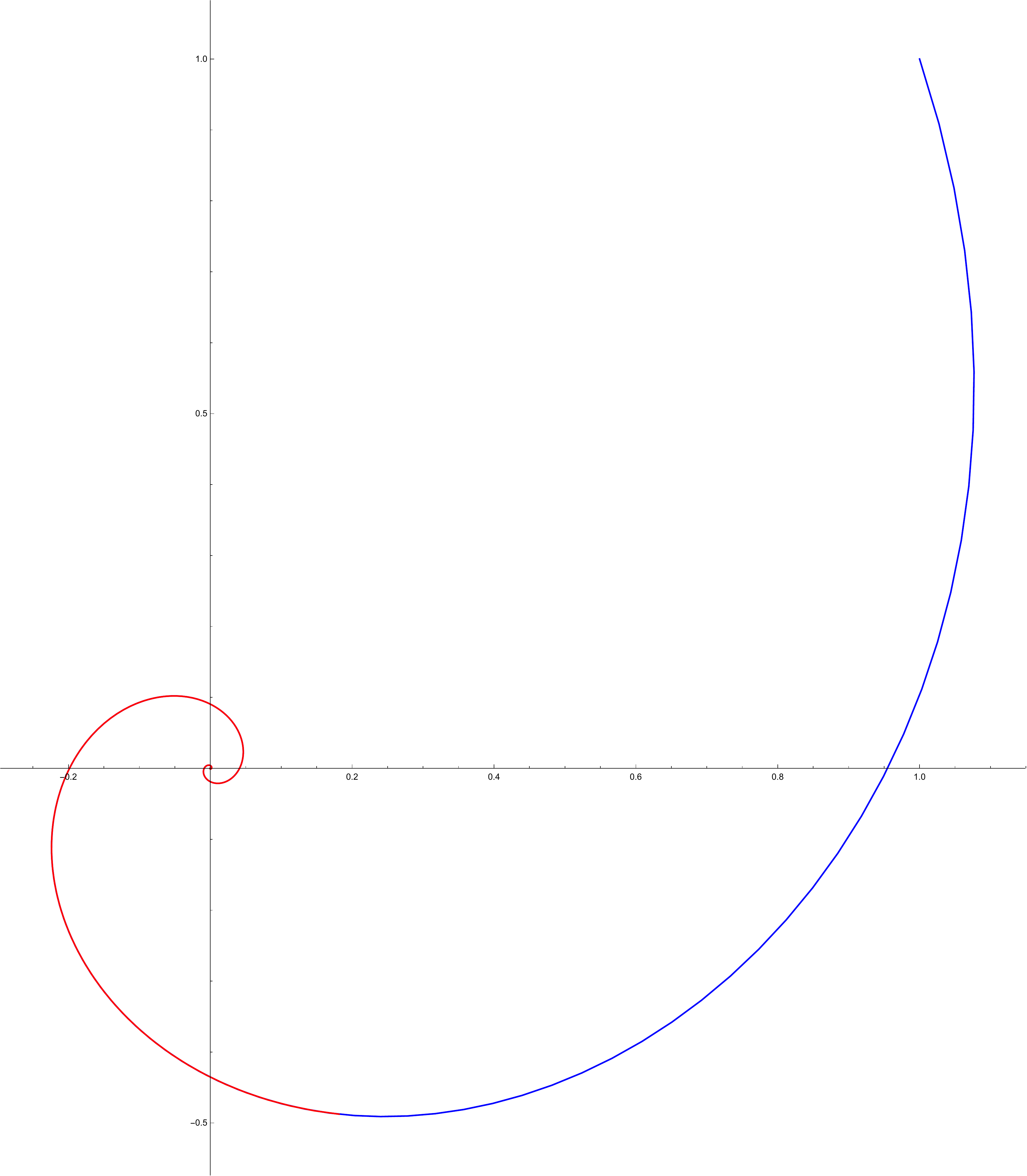}
                 \caption{Solution trajectory of system (\ref{3.11}) starting at some point on original trajectory.}
                 \label{Fig 1}
\end{center}       
        \end{figure}        
 
 {\bf (2)} Consider the non-autonomous system of differential equations 
 \begin{equation}
 \dot{X}(t)=\begin{bmatrix}
 6t\\ 3t^2-3
 \end{bmatrix}.\label{3.12}
 \end{equation}
 The solution trajectories of this system with distinct initial points (as above) are shown in Figure \ref{Fig 2}. It can be observed that (cf. blue curve in Figure \ref{Fig 2}), the loop in the original trajectory can be eliminated by choosing the initial condition of a new trajectory at a point on the original trajectory after self-intersection.   
 \begin{figure}[h]
 \begin{center}
           \includegraphics[width=0.5\textwidth]{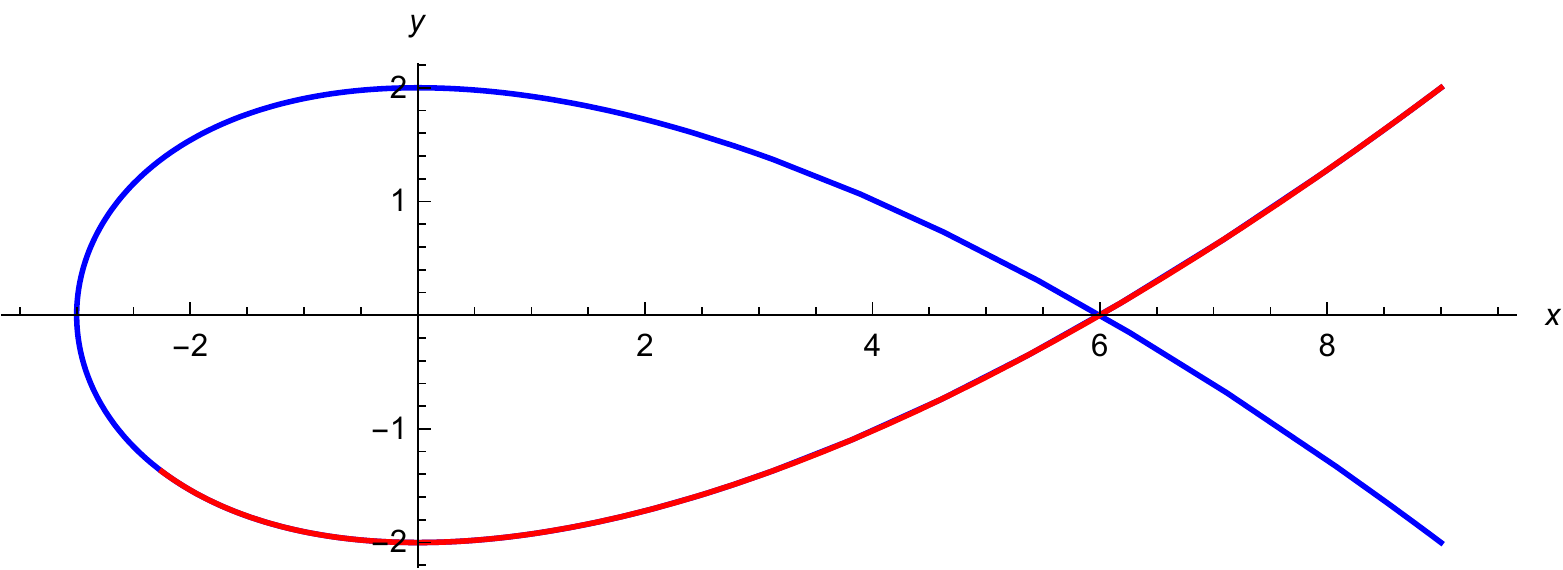}
          \caption{Self-intersecting trajectory of system (\ref{3.12}).}
         \label{Fig 2}
 \end{center}  
         \end{figure}

{\bf (3)} Consider fractional order system 
 \begin{equation}
 {}_0^C\mathrm{D}_t^{0.7} X(t)=
 \begin{bmatrix}
 -1 & 3\\
 -3 & -1
 \end{bmatrix}
 X(t). \label{3.13}
 \end{equation}
 In Figure \ref{Fig 3}, we show solutions to this system with different initial conditions. As in the last case, the initial condition of the second system is at some point on the original trajectory. However, the paths followed by these two trajectories are different, unlike in classical model (\ref{3.11}).
  \begin{figure}[h]
   \begin{center}
             \includegraphics[width=0.3\textwidth]{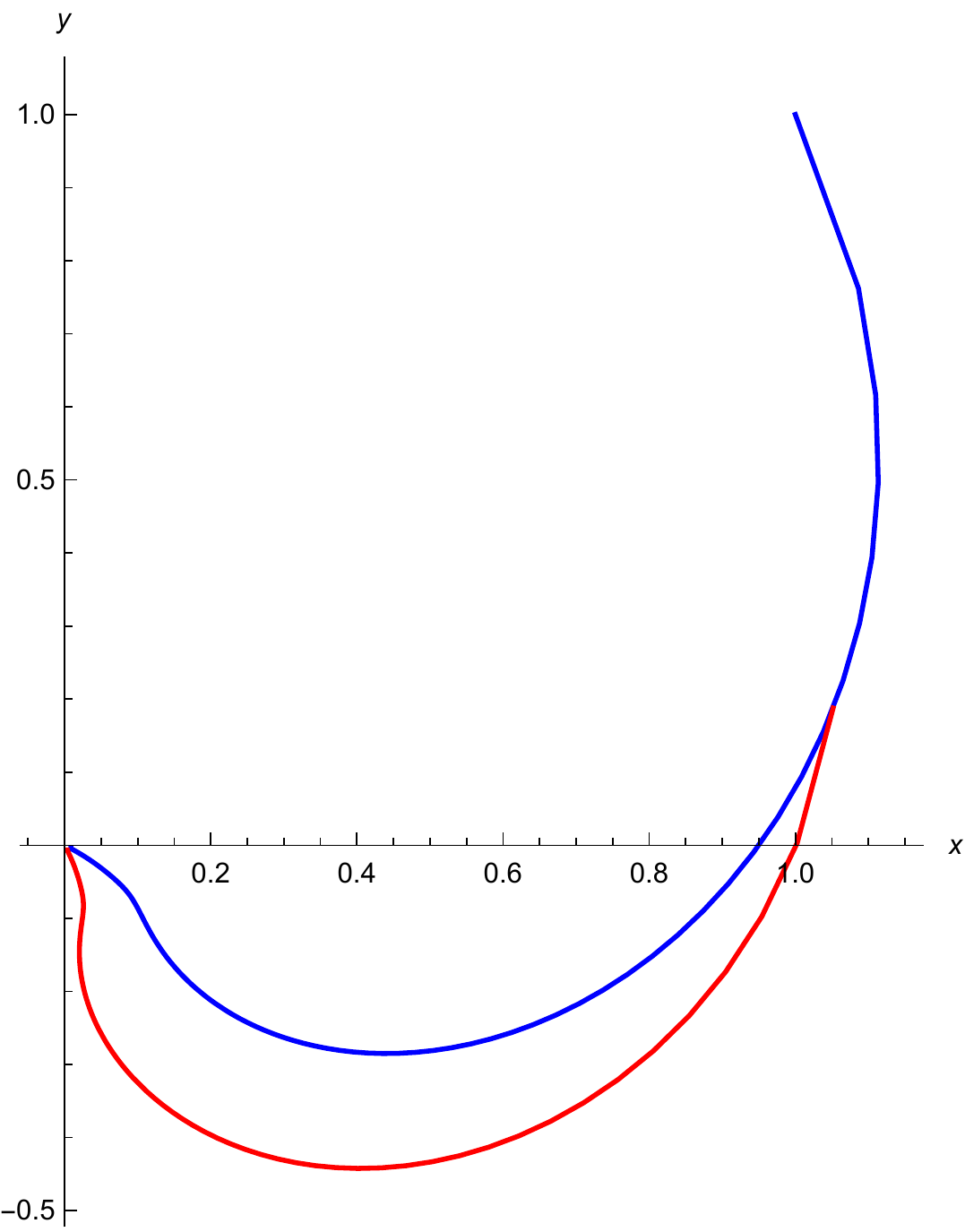}
            \caption{solution trajectory of system (\ref{3.13}).}         
   \label{Fig 3}
   \end{center}  
          \end{figure}  

 {\bf (4)} In paper \cite{S. Bhalekar}, we have observed self-intersecting trajectories of some linear fractional order systems. Consider the system,
 \begin{equation}
  {}_0^C\mathrm{D}_t^{0.1} X(t)=
  \begin{bmatrix}
 0.983469 & 0.181075\\
  -0.181075 & 0.983469
  \end{bmatrix}
  X(t). \label{3.14}
  \end{equation}
  If $X(0)=[1, 1]^T$, the solution trajectory shows self-intersection (see Figure \ref{Fig 4}(a)). Let us consider this system with initial condition $X(0)=[Re(E_{0.1} (0.983469 + 0.181075 i)50^{0.1}) + Im(E_{0.1} (0.983469 + 0.181075 i)50^{0.1}), -Im(E_{0.1} (0.983469 + 0.181075 i)50^{0.1}) + Re(E_{0.1} (0.983469 + 0.181075 i)50^{0.1})]^T$ on the original trajectory.
  \par Though we have taken new initial condition on the original trajectory at a point after self-intersection, the singular points cannot be removed unlike in classical case (2). Further, it seems that the new trajectory is some linear transformation of the original one.
  \begin{figure}[h]
              \subfloat[$ x(0)=y(0)=1$.]{\includegraphics[width=0.45\textwidth]{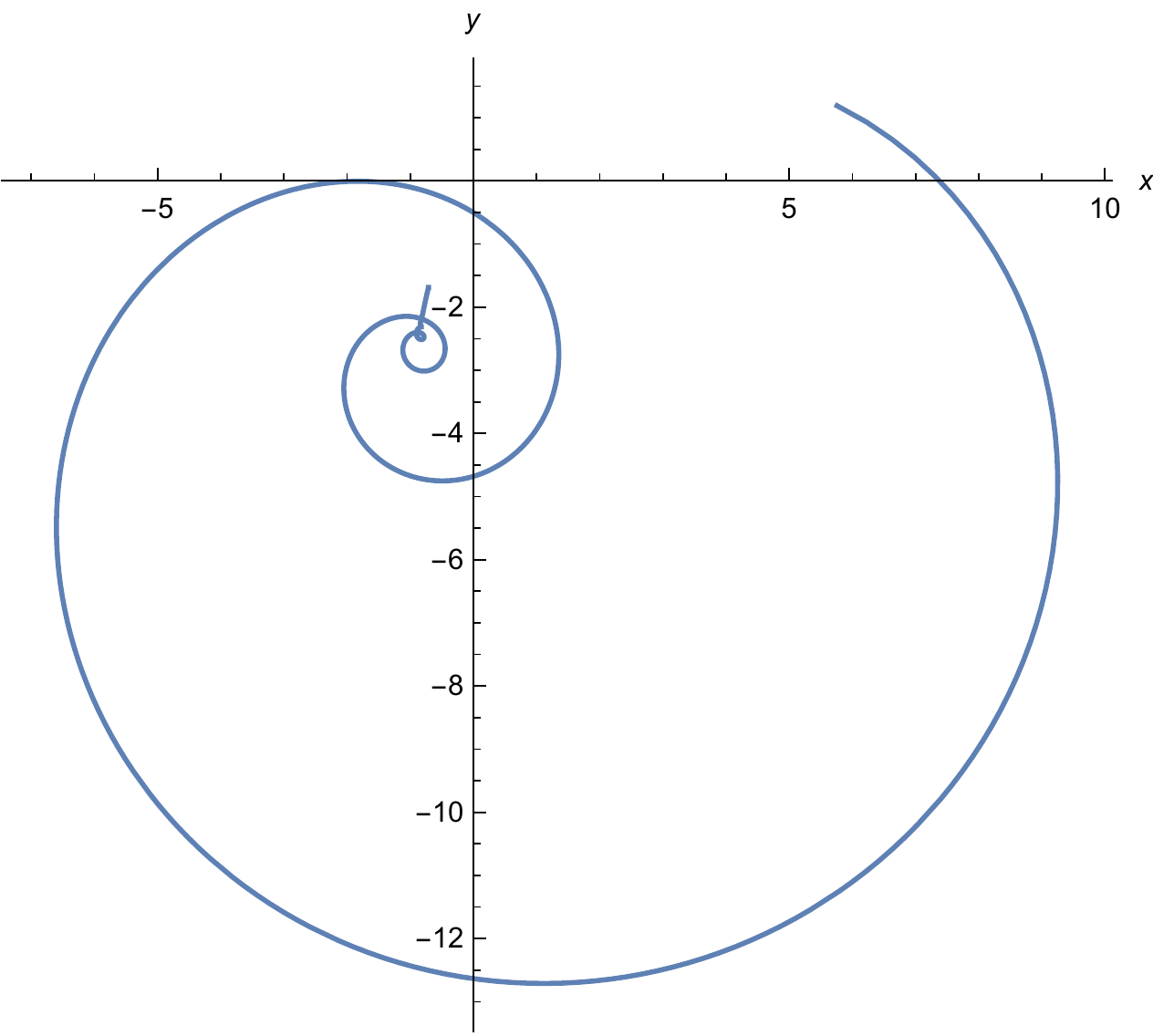}}
              \hfill 
              \subfloat[x(0) = -0.777226, 
          y(0) = -1.98038.]{\includegraphics[width=0.4\textwidth]{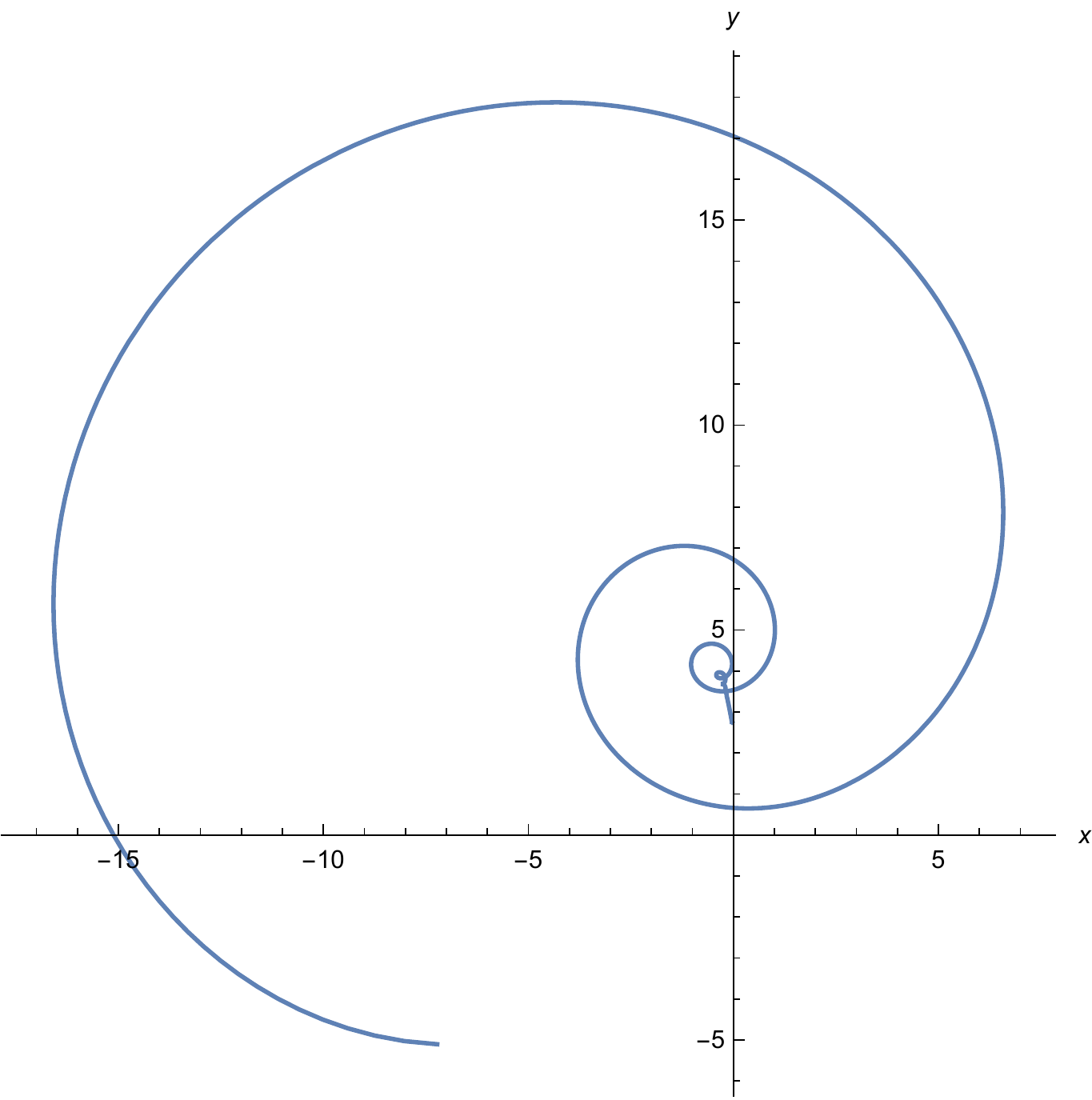}}
              \caption{Transformation of self-intersecting trajectory of system  (\ref{3.14}).
              } \label{Fig 4}
            \end{figure}   

{\bf (5)} Time used to complete the loop:\\
Consider the system (\ref{3.14}) with initial condition $X(0)=[1, 1]^T$. Consider a node formed by solution trajectory in the time interval $(12.35, 34)$. If we solve the system (\ref{3.14}) with initial condition $X(0)=[Re(E_{0.1} (0.983469 + 0.181075 i)50^{0.1}) + Im(E_{0.1} (0.983469 + 0.181075 i)50^{0.1}), -Im(E_{0.1} (0.983469 + 0.181075 i)50^{0.1}) + Re(E_{0.1} (0.983469 + 0.181075 i)50^{0.1})]^T$ on the original trajectory, then we get a node in the new trajectory in the same time interval $(12.35, 34)$. However, it seems that the new node has different size and is obtained by rotating the original node as shown in  Figure \ref{Fig 5}.  Further, the time taken to complete the loop in both the nodes is  same but the speed is different.
 \begin{figure}[h]
 \begin{center}
   \includegraphics[width=1.0\textwidth]{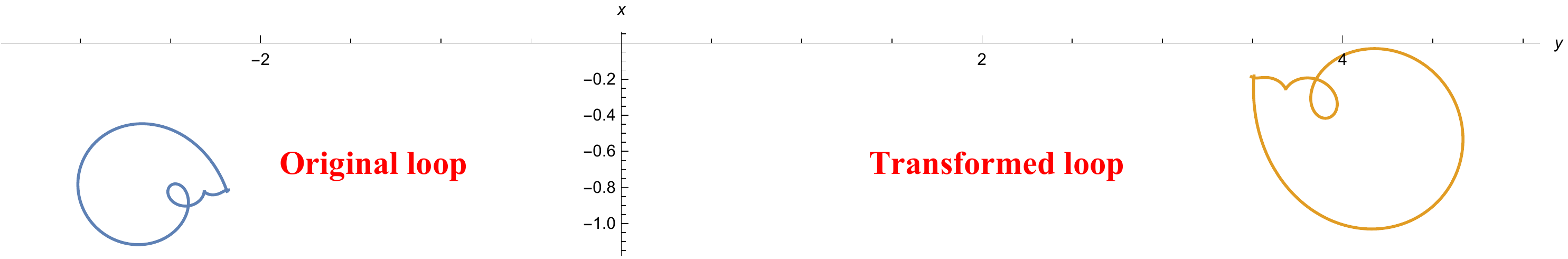}
   \caption{Transformed loops in the system  (\ref{3.14}).}
 \label{Fig 5}
    \end{center}    
            \end{figure}               
\par Our motivation for the present study is to find the linear transformation between original trajectory and the new trajectory of the fractional order system.                  

\section{Analysis}
In this section, we consider linear system of integer and fractional differential equations in $\mathbb{R}^2$ and $\mathbb{R}^3$. First we solve the system with initial condition $X(0)=X_0$ to obtain the solution $X(t)$. Then we solve the same system with initial condition at $X(t_1)$ for some $t_1>0$ and call the new solution as $Y(t)$. We show that $Y(t)=TX(t)$, where $T$ is some linear transformation.
\begin{lem}\label{Lem 3,4.1}
Consider a planar system $\dot{X}(t)=AX(t)$.\\
New trajectory $Y(t)$ starting at some point $X(t_1)$ on original trajectory $X(t)$ is given by the linear transformation
\begin{equation}
Y(t)=TX(t),\label{3.15}
\end{equation}
where $T=e^{At_1}$.\\
(i) If $A$ has real-distinct eigenvalues then $T$ represents scaling (only).\\
(ii) If $A$ has complex conjugate eigenvalues $a\pm ib$ then $T$ represents both scaling and rotation.
\end{lem}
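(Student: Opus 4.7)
The plan is to start from the closed-form solution for linear autonomous systems and reduce everything to a statement about the matrix exponential $e^{At_1}$. Since $\dot{X} = AX$ has solution $X(t) = e^{At}X(0)$, the trajectory starting at $Y(0) = X(t_1)$ is $Y(t) = e^{At}X(t_1) = e^{At}e^{At_1}X(0)$. Because $At$ and $At_1$ commute, their exponentials commute, so $Y(t) = e^{At_1}e^{At}X(0) = e^{At_1}X(t)$, establishing $Y(t) = TX(t)$ with $T = e^{At_1}$. This takes care of the main identity (\ref{3.15}) in just a few lines.

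For part (i), I would diagonalise. If $A$ has distinct real eigenvalues $\lambda_1, \lambda_2$ with corresponding eigenvectors $v_1, v_2$, write $A = PDP^{-1}$ where $D = \operatorname{diag}(\lambda_1,\lambda_2)$ and the columns of $P$ are $v_1, v_2$. Then
\begin{equation*}
T \;=\; e^{At_1} \;=\; P\,\operatorname{diag}\bigl(e^{\lambda_1 t_1},\,e^{\lambda_2 t_1}\bigr)\,P^{-1}.
\end{equation*}
In the eigenvector basis $\{v_1,v_2\}$, $T$ acts as multiplication by $e^{\lambda_1 t_1}$ along $v_1$ and by $e^{\lambda_2 t_1}$ along $v_2$; these factors are real and positive, so $T$ is a (generally non-uniform) scaling along the two eigendirections, with no rotational component.

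For part (ii), the idea is to use the real canonical form. When $A$ has eigenvalues $a \pm ib$ with $b \neq 0$, there exists a real invertible matrix $P$ such that $A = P\begin{pmatrix} a & -b \\ b & a \end{pmatrix}P^{-1}$. Since $\begin{pmatrix} a & -b \\ b & a\end{pmatrix} = aI + b J$ with $J = \begin{pmatrix} 0 & -1 \\ 1 & 0\end{pmatrix}$, and $aI$ commutes with $bJ$, exponentiation gives
\begin{equation*}
e^{At_1} \;=\; P\,e^{at_1}\!\begin{pmatrix} \cos(bt_1) & -\sin(bt_1) \\ \sin(bt_1) & \phantom{-}\cos(bt_1)\end{pmatrix}\!P^{-1},
\end{equation*}
which in the $P$-basis is precisely the composition of a rotation by angle $bt_1$ and a uniform scaling by factor $e^{at_1}$.

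The step I expect to be the main conceptual subtlety, rather than the main technical obstacle, is the interpretation of ``scaling'' in part (i): in the standard coordinates $T$ is not generally a diagonal matrix, and I would want to make explicit that the claim refers to its action in the eigenbasis, i.e.\ that $T$ has two positive real eigenvalues and is diagonalisable with real eigenvectors, so it carries no rotational component in the sense of having no complex eigenvalue pair. Everything else is a short computation using commutativity of matrix exponentials and the standard real canonical form, both of which are routine.
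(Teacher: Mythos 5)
Your proposal is correct and follows essentially the same route as the paper: derive $Y(t)=e^{At}e^{At_1}X(0)=e^{At_1}X(t)$ from commutativity of the exponentials, then read off scaling from the diagonal form in case (i) and the factorization $e^{at_1}$ times a rotation matrix in case (ii). The only difference is cosmetic --- the paper simply replaces $A$ by its canonical form at the outset (remarking that the qualitative behavior is unchanged), whereas you carry the conjugating matrix $P$ explicitly and correctly note that ``scaling'' in case (i) must be interpreted in the eigenbasis; this is a slightly more careful statement of the same argument.
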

\begin{proof}
Solution of a system of ODEs $\dot{X}(t)=AX(t)$, \, $X(0)=X_0$ is given by
\begin{equation*}
X(t)=e^{At}X_0.
\end{equation*}
Now, let us consider the system $\dot{Y}(t)=AY(t)$, \, $Y(0)=X_1$, where $X_1=X(t_1)=e^{At_1}X_0$. Then its solution is given by,
$
Y(t)  = e^{At}X_1
= e^{At_1}e^{At}X_0
 = TX(t), 
$
where $T=e^{At_1}$.\\
The qualitative behavior of the system $X'(t)=AX(t)$ does not change if we replace $A$ by its canonical form.\\
(i) If  
$A=
\begin{bmatrix}
\lambda_1 & 0\\
0 & \lambda_2
\end{bmatrix}
$, then 
\begin{equation*}
T= e^{At_1}  = \begin{bmatrix}
e^{\lambda_1t_1} & 0\\
 \\
0 & e^{\lambda_2t_1} 
\end{bmatrix}.
\end{equation*}
Here $T$ represents scaling only.\\
The type of scaling depends on sign of $\lambda_j$, $j=1,2$.
\\(ii) If 
$A=
\begin{bmatrix}
a & b\\
-b & a
\end{bmatrix}
$, then 
\begin{equation*}
\begin{split}
T= e^{At_1} & = \begin{bmatrix}
e^{at_1}\cos(bt_1) & e^{at_1}\sin(bt_1)\\
& \\
-e^{at_1}\sin(bt_1) & e^{at_1}\cos(bt_1) 
\end{bmatrix}\\[0.05cm]
& = \begin{bmatrix}
e^{at_1} & 0\\
& \\
0 & e^{at_1} 
\end{bmatrix}
\begin{bmatrix}
\cos(bt_1) & \sin(bt_1)\\
& \\
-\sin(bt_1) & \cos(bt_1) 
\end{bmatrix}\\
& = U\cdot V
\end{split}
\end{equation*}
where $U=\begin{bmatrix}
e^{at_1} & 0\\
& \\
0 & e^{at_1} 
\end{bmatrix}$ is scaling matrix and $V=\begin{bmatrix}
\cos(bt_1) & \sin(bt_1)\\
& \\
-\sin(bt_1) & \cos(bt_1) 
\end{bmatrix}$ is rotation matrix.
\end{proof}
\noindent {\bf Comment:} Scaling factor depends on real part of eigenvalue whereas 
imaginary part of eigenvalue represents angle of rotation. The curves $X(t)$ and $U^{-1}Y(t)$ are congruent.
\begin{Ex}\label{Ex 3,4.1}
Consider the two classical systems
 \begin{equation}
\dot{X}(t)=\begin{bmatrix}
-1 & 0\\
1 & -2
\end{bmatrix}X(t)\label{3.16}
\end{equation} 
and
 \begin{equation}
\dot{X}(t)=\begin{bmatrix}
0 & 1\\
-4 & 0
\end{bmatrix}X(t).\label{3.17}
\end{equation} 
\end{Ex}
In Figure \ref{Fig 6} (a) and \ref{Fig 6} (b) we sketch the solutions of system (\ref{3.16}) and (\ref{3.17}) respectively with initial conditions $X(0)=[
1, 1
]^T$ (Blue color) and $Y(0)=X(1)$ (Red color). It can be checked that both the trajectories follow the same path.
\begin{figure}[h]
               \subfloat[Solution trajectories of system (\ref{3.16})]{\includegraphics[width=0.45\textwidth]{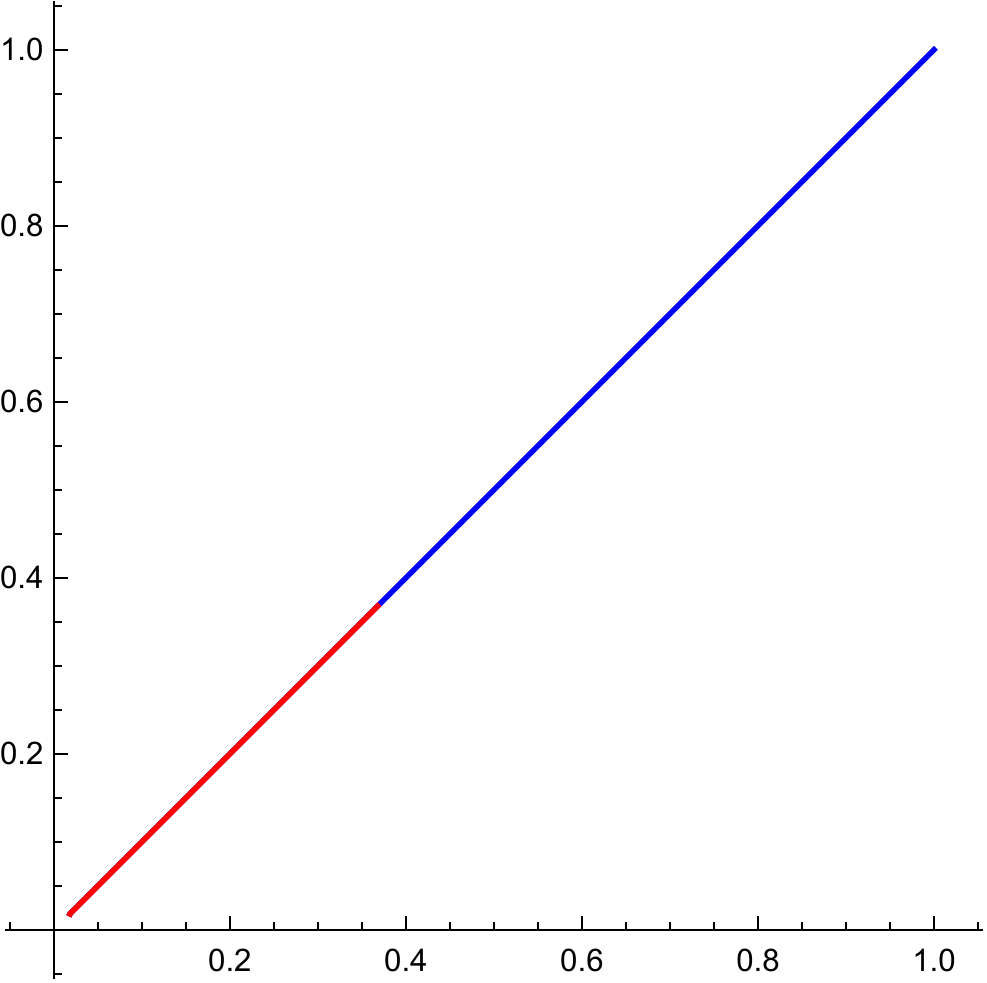}}
               \hfill 
               \subfloat[Solution trajectories of system (\ref{3.17})]{\includegraphics[width=0.3\textwidth]{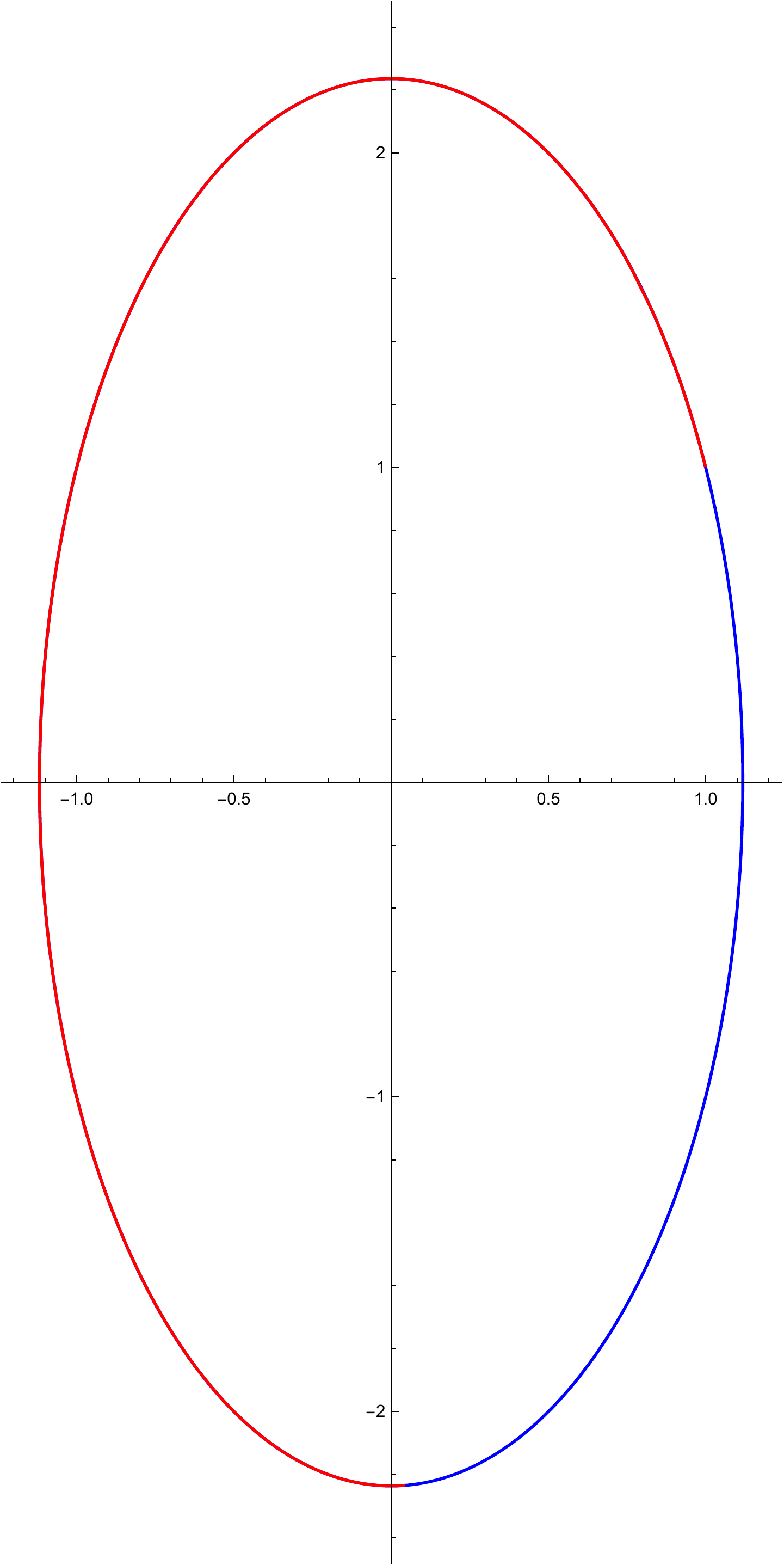}}
               \caption{
               } \label{Fig 6}
             \end{figure}  
\begin{lem}\label{Lem 3,4.2}
Consider a planar system ${}_0^C\mathrm{D}_t^\alpha X(t)=AX(t)$,\, $0<\alpha<1$.\\
New trajectory $Y(t)$ starting at some point $X(t_1)$ on original trajectory $X(t)$ is given by the linear transformation
\begin{equation}
Y(t)=TX(t),\label{3.18}
\end{equation}
where $T=E_\alpha(At_1^\alpha)$.\\
(i) If $A$ has real-distinct eigenvalues then $T$ represents scaling (only).\\
(ii) If $A$ has complex conjugate eigenvalues $a\pm ib$ then $T$ represents both scaling and rotation.
\end{lem}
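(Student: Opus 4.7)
The plan is to mirror the integer-order argument of Lemma 3.4.1, but replacing $e^{At}$ by the matrix Mittag-Leffler function provided by Theorem \ref{Thm 3,2.1}. By Theorem \ref{Thm 3,2.1}, the solution of the fractional system with initial value $X_0$ is $X(t)=E_\alpha(At^\alpha)X_0$. The new trajectory satisfies $Y(0)=X(t_1)=E_\alpha(At_1^\alpha)X_0$, and again by Theorem \ref{Thm 3,2.1}, $Y(t)=E_\alpha(At^\alpha)Y(0)=E_\alpha(At^\alpha)E_\alpha(At_1^\alpha)X_0$. Since the two matrix Mittag-Leffler factors are both power series in the single matrix $A$, they commute, so $Y(t)=E_\alpha(At_1^\alpha)E_\alpha(At^\alpha)X_0=TX(t)$ with $T=E_\alpha(At_1^\alpha)$. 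A point worth highlighting is that, in contrast with the integer-order proof, we cannot appeal to a semigroup identity $E_\alpha(A(t+t_1)^\alpha)=E_\alpha(At^\alpha)E_\alpha(At_1^\alpha)$ — and it is precisely the failure of this identity that is responsible for $Y$ not retracing the path of $X$.

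For part (i), I would reduce $A$ to its canonical form and take $A=\mathrm{diag}(\lambda_1,\lambda_2)$. Then the series definition of $E_\alpha$ acts entrywise on diagonal matrices, so $T=\mathrm{diag}\bigl(E_\alpha(\lambda_1 t_1^\alpha),\,E_\alpha(\lambda_2 t_1^\alpha)\bigr)$, which is a pure (possibly anisotropic) scaling — no rotation component arises.

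For part (ii), with $A=\begin{bmatrix}a & b\\-b & a\end{bmatrix}$, I would use the standard identification $\mathbb{R}^2\cong\mathbb{C}$, $(x,y)^T\leftrightarrow x+iy$, under which $A$ acts as multiplication by the complex scalar $a-ib$, and hence $A^k$ acts as multiplication by $(a-ib)^k$. Summing the series term by term, $T=E_\alpha(At_1^\alpha)$ acts as multiplication by the complex number $E_\alpha((a-ib)t_1^\alpha)$. Writing this scalar in polar form $re^{i\theta}$ with $r=|E_\alpha((a-ib)t_1^\alpha)|$ and $\theta=\arg E_\alpha((a-ib)t_1^\alpha)$, the corresponding $2\times 2$ real matrix is $rR(\theta)=\bigl(rI\bigr)R(\theta)$, exhibiting $T$ as the composition of a scaling (factor $r$) and a rotation (angle $\theta$), matching the structure claimed.

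The proof is mostly routine once the commutativity argument in the first paragraph is in place; the only subtle point is the polar decomposition in case (ii). Here the main obstacle relative to the integer case is interpretive rather than technical: for $e^{(a-ib)t_1}$ one reads off the scaling $e^{at_1}$ and the angle $-bt_1$ instantly, whereas for $E_\alpha((a-ib)t_1^\alpha)$ the modulus and argument do not separate along the real and imaginary parts of the eigenvalue and must be treated as a single complex Mittag-Leffler value. I would close with a brief comment, analogous to the comment following Lemma 3.4.1, noting that $X(t)$ and $(rI)^{-1}Y(t)$ are congruent in the sense of Definition \ref{def 3,2.6}.
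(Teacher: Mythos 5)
Your proposal is correct and follows essentially the same route as the paper: solve both initial value problems via Theorem \ref{Thm 3,2.1}, commute the two Mittag-Leffler factors to get $T=E_\alpha(At_1^\alpha)$, reduce $A$ to canonical form, and in case (ii) split $T$ into the scaling $|E_\alpha((a+ib)t_1^\alpha)|I$ times a rotation. Your only departures are cosmetic improvements — you justify the commutation explicitly (the paper swaps the factors silently) and derive the polar decomposition in (ii) via the identification $\mathbb{R}^2\cong\mathbb{C}$ rather than by writing out the real $2\times2$ matrix in terms of $\mathrm{Re}$ and $\mathrm{Im}$ of $E_\alpha((a+ib)t_1^\alpha)$, which yields the identical $U\cdot V$ factorization.
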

\begin{proof}
 Solution of a system of FDEs \, ${}_0^C\mathrm{D}_t^\alpha X(t)=AX(t)$, \, $0<\alpha<1$, \, $X(0)=X_0$ is given by
\begin{equation*}
X(t)=E_\alpha(At^\alpha)X_0.
\end{equation*}
Now, let us consider the system ${}_0^C\mathrm{D}_t^\alpha Y(t)=A Y(t)$, \, $Y(0)=X_1$, where $X_1=X(t_1)=E_\alpha(At_1^\alpha)X_0$. Then its solution is given by,
$
Y(t)  = E_\alpha(At^\alpha)X_1
= E_\alpha(At_1^\alpha)E_\alpha(At^\alpha)X_0
 = TX(t), 
$
where $T=E_\alpha(At_1^\alpha)$. As in Lemma \ref{Lem 3,4.1}, we assume that A is in canonical form.\\
(i) If  
$A=
\begin{bmatrix}
\lambda_1 & 0\\
0 & \lambda_2
\end{bmatrix}
$, then 
\begin{equation*}
T= E_\alpha(At_1^\alpha)  = \begin{bmatrix}
E_\alpha(\lambda_1 t_1^\alpha) & 0\\
 \\
0 & E_\alpha(\lambda_2 t_1^\alpha) 
\end{bmatrix}.
\end{equation*}
Here $T$ is a scaling matrix.
\\(ii) If 
$A=
\begin{bmatrix}
a & b\\
-b & a
\end{bmatrix}
$ then 
\begin{equation*}
\begin{split}
T= e^{At_1} & = \begin{bmatrix}
Re[E_\alpha ((a+ib)t_1^\alpha)] & Im[E_\alpha ((a+ib)t_1^\alpha)]\\
& \\
-Im[E_\alpha ((a+ib)t_1^\alpha)] & Re[E_\alpha ((a+ib)t_1^\alpha)]
\end{bmatrix}\\[0.05cm]
& = \begin{bmatrix}
|E_\alpha ((a+ib)t_1^\alpha)| & 0\\
& \\
0 & |E_\alpha ((a+ib)t_1^\alpha)| 
\end{bmatrix}
\begin{bmatrix}
\frac{Re[E_\alpha ((a+ib)t_1^\alpha)]}{|E_\alpha ((a+ib)t_1^\alpha)|} & \frac{Im[E_\alpha ((a+ib)t_1^\alpha)]}{|E_\alpha ((a+ib)t_1^\alpha)|}\\
& \\
-\frac{Im[E_\alpha ((a+ib)t_1^\alpha)]}{|E_\alpha ((a+ib)t_1^\alpha)|} & \frac{Re[E_\alpha ((a+ib)t_1^\alpha)]}{|E_\alpha ((a+ib)t_1^\alpha)|}
\end{bmatrix}\\
& = U\cdot V
\end{split}
\end{equation*}
where $U=\begin{bmatrix}
|E_\alpha ((a+ib)t_1^\alpha)| & 0\\
& \\
0 & |E_\alpha ((a+ib)t_1^\alpha)| 
\end{bmatrix}$ is scaling matrix and $V=\begin{bmatrix}
\frac{Re[E_\alpha ((a+ib)t_1^\alpha)]}{|E_\alpha ((a+ib)t_1^\alpha)|} & \frac{Im[E_\alpha ((a+ib)t_1^\alpha)]}{|E_\alpha ((a+ib)t_1^\alpha)|}\\
& \\
-\frac{Im[E_\alpha ((a+ib)t_1^\alpha)]}{|E_\alpha ((a+ib)t_1^\alpha)|} & \frac{Re[E_\alpha ((a+ib)t_1^\alpha)]}{|E_\alpha ((a+ib)t_1^\alpha)|}
\end{bmatrix}$ is rotation matrix.
\end{proof}
\noindent {\bf Comment:}  Unlike in integer order case, the scaling not only depends on $a$ but also on $b$. The curves $X(t)$ and $U^{-1}Y(t)$ are congruent.
\begin{Ex}\label{Ex 3,4.2}
General solution of,
 \begin{equation}
{}_0^C\mathrm{D}_t^\alpha X(t)=AX(t),\,0<\alpha<1, \,\,where \,\,A=\begin{bmatrix}
-1 & 0\\
1 & -2
\end{bmatrix}\,\, and \,\, X(0)=\begin{bmatrix}
c_1\\c_2
\end{bmatrix}\label{3.19}
\end{equation}
is given by 
\begin{equation}
X(t)=\begin{bmatrix}
c_1E_\alpha(-t^\alpha)\\
c_1E_\alpha(-t^\alpha)+(c_2-c_1)E_\alpha(-2t^\alpha)
\end{bmatrix}.\label{3.20}
\end{equation} 
Let $Y(t)$ be a solution of\, ${}_0^C\mathrm{D}_t^\alpha Y(t)=AY(t)$ with $Y(0)=X(t_1)$, $t_1>0$.\\
We sketch the solution trajectories $X(t)$ (Blue color) of the system (\ref{3.19}) subject to the initial condition $X(0)=[1, 2]^T$ and $Y(t)$ (Red color) with initial condition $Y(0)=X(1)$ in the Figure \ref{Fig 7} (a).
\end{Ex}
\begin{Ex}\label{Ex 3,4.3}
General solution of,
 \begin{equation}
{}_0^C\mathrm{D}_t^\alpha X(t)=AX(t),\,0<\alpha<1, \,\,where \,\,A=\begin{bmatrix}
0 & 1\\
-4 & 0
\end{bmatrix}\,\, and \,\, X(0)=\begin{bmatrix}
c_1\\c_2
\end{bmatrix}\label{3.21}
\end{equation}
is given by 
\begin{equation}
X(t)=\begin{bmatrix}
c_1Re[E_\alpha(2it^\alpha)]+(c_2/2)Im[E_\alpha(2it^\alpha)]\\
-2c_1Im[E_\alpha(2it^\alpha)]+c_2Re[E_\alpha(2it^\alpha)]
\end{bmatrix}.\label{3.22}
\end{equation} 
Let $Y(t)$ be a solution of\, ${}_0^C\mathrm{D}_t^\alpha Y(t)=AY(t)$ with $Y(0)=X(t_1)$, $t_1>0$.\\
We sketch the solution trajectories $X(t)$ (Blue color) of the system (\ref{3.21}) with $X(0)=[1, 1]^T$ and $Y(t)$ (Red color) with $Y(0)=X(1)$ in the Figure \ref{Fig 7} (b).
\end{Ex}
 \begin{figure}[h]
               \subfloat[Solution trajectories of system (\ref{3.19})]{\includegraphics[width=0.3\textwidth]{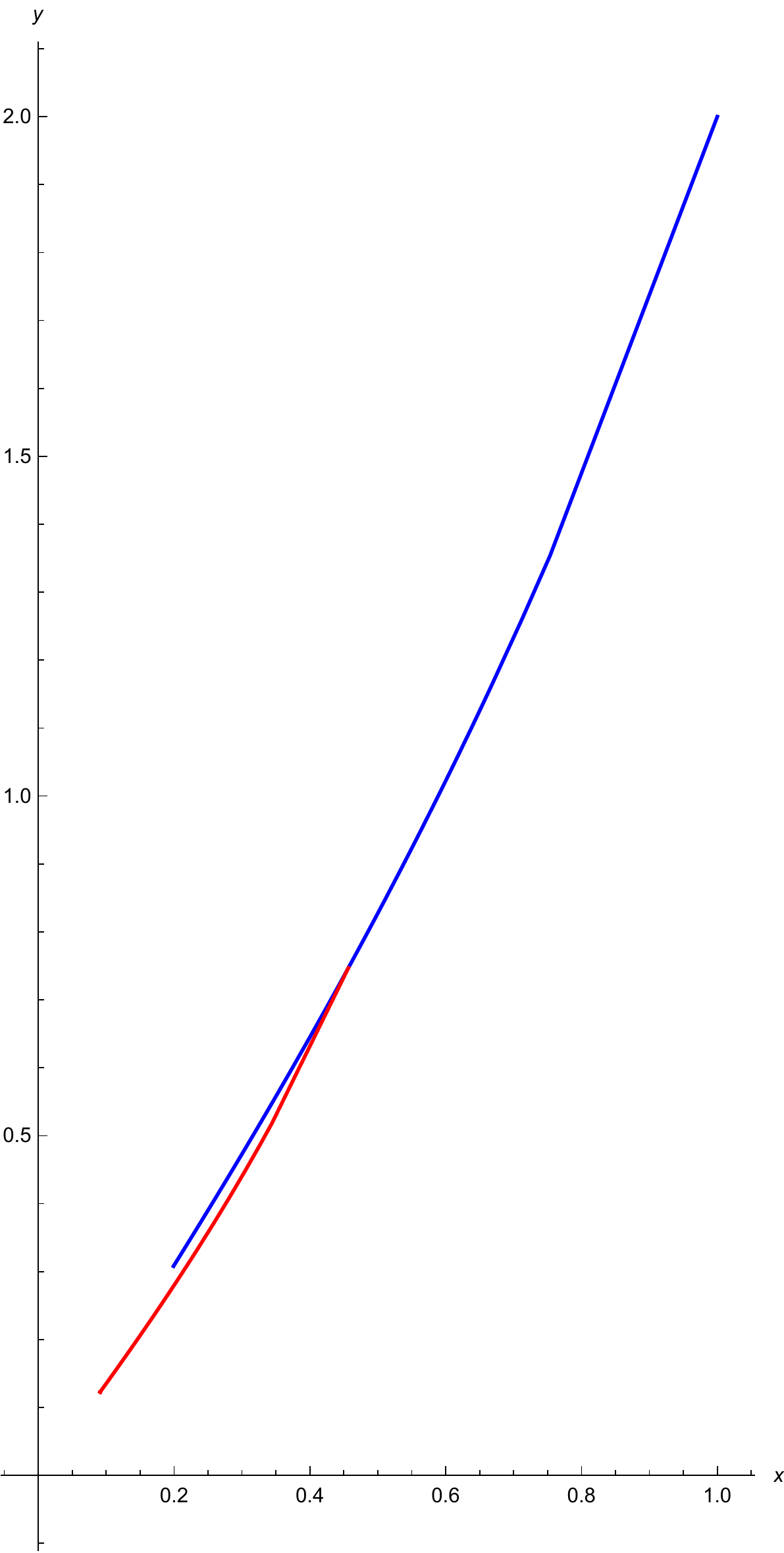}}
               \hfill 
               \subfloat[Solution trajectories of system (\ref{3.21})]{\includegraphics[width=0.4\textwidth]{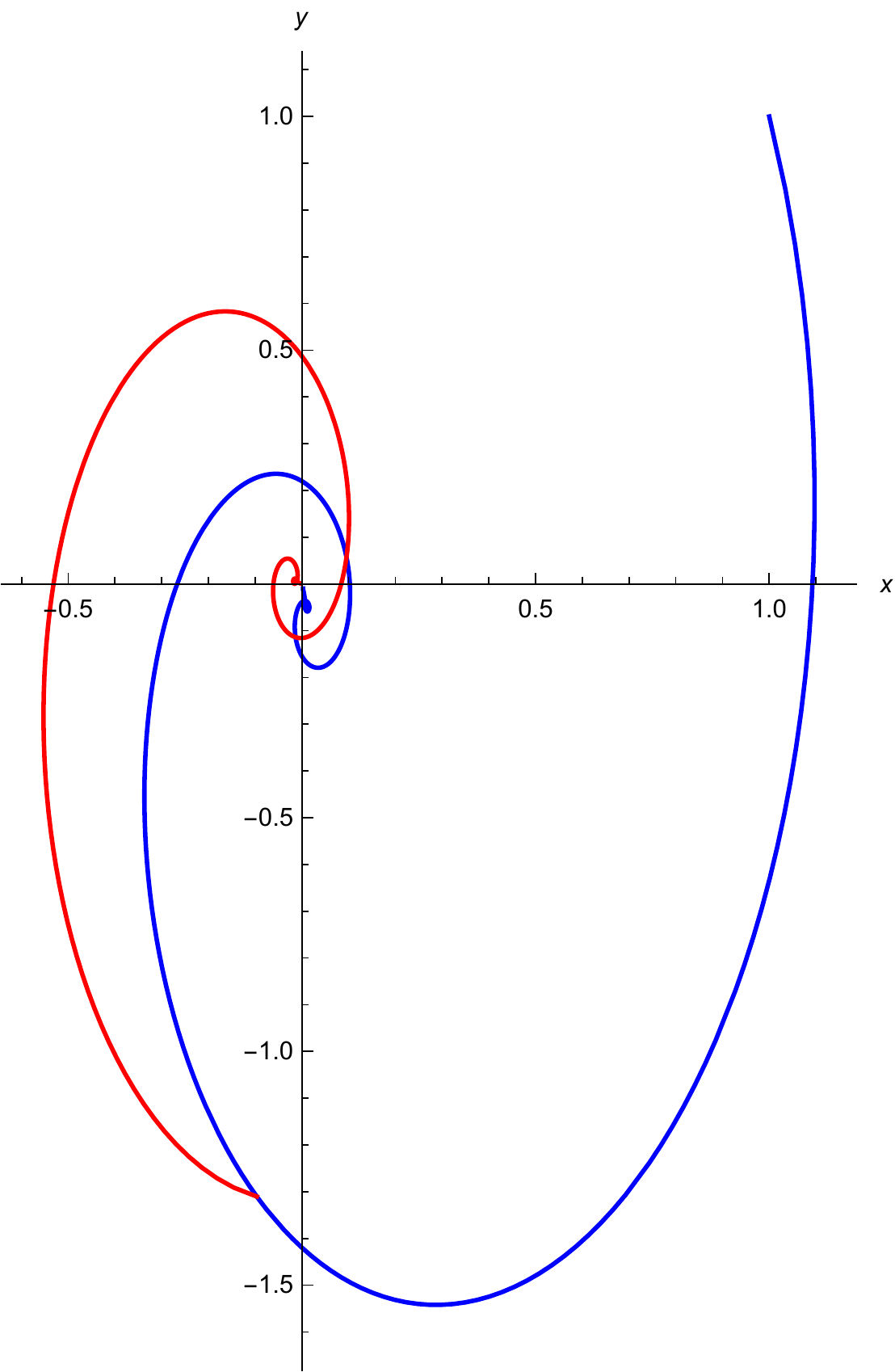}}
               \caption{
               } \label{Fig 7}
             \end{figure}      
\begin{The}\label{Thm 3,4.1}
Consider a system $\dot{X}(t)=AX(t)$, where $A$ is $3 \times 3$ matrix.\\
New trajectory $Y(t)$ starting at some point $X(t_1)$ on original trajectory $X(t)$ is given by the linear transformation
\begin{equation}
Y(t)=TX(t),\label{3.23}
\end{equation}
where $T=e^{At_1}$.\\
(i) If $A$ has real-distinct eigenvalues then $T$ represents scaling (only).\\
(ii) If $A$ has complex conjugate eigenvalues $a\pm ib$ and a real eigenvalue $\lambda$ then $T$ represents both scaling and rotation.
\end{The}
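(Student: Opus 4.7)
The plan is to mirror the proof of Lemma~\ref{Lem 3,4.1} almost verbatim, the only new ingredient being the real canonical form of a $3\times 3$ matrix. First, I would establish the identity $Y(t)=TX(t)$ with $T=e^{At_1}$. Since $X(t)=e^{At}X_0$ and $Y(0)=X(t_1)=e^{At_1}X_0$, the semigroup property $e^{At}e^{As}=e^{A(t+s)}$ gives
\begin{equation*}
Y(t)=e^{At}Y(0)=e^{At}e^{At_1}X_0=e^{At_1}e^{At}X_0=TX(t).
\end{equation*}
As in Lemma~\ref{Lem 3,4.1}, the qualitative behaviour of $\dot X=AX$ is unaffected by similarity, so I would then replace $A$ by its real canonical form and examine the two cases.

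For part~(i), if $A=\operatorname{diag}(\lambda_1,\lambda_2,\lambda_3)$, then $T=\operatorname{diag}(e^{\lambda_1 t_1},e^{\lambda_2 t_1},e^{\lambda_3 t_1})$, which is a diagonal, positive-entry (hence pure) scaling along the three coordinate axes. The type of scaling in each direction (stretching or compression) is governed by the sign of $\lambda_j$, exactly as in the planar situation.

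For part~(ii), the relevant real canonical form is the block diagonal matrix
\begin{equation*}
A=\begin{bmatrix} a & b & 0 \\ -b & a & 0 \\ 0 & 0 & \lambda \end{bmatrix},
\end{equation*}
so $e^{At_1}$ is block diagonal with the rotation-scaling block from Lemma~\ref{Lem 3,4.1}(ii) in the top-left corner and $e^{\lambda t_1}$ in the bottom-right corner. I would then factor $T=U\cdot V$ with
\begin{equation*}
U=\begin{bmatrix} e^{at_1} & 0 & 0 \\ 0 & e^{at_1} & 0 \\ 0 & 0 & e^{\lambda t_1} \end{bmatrix},\qquad V=\begin{bmatrix} \cos(bt_1) & \sin(bt_1) & 0 \\ -\sin(bt_1) & \cos(bt_1) & 0 \\ 0 & 0 & 1 \end{bmatrix},
\end{equation*}
where $U$ is a diagonal scaling and $V$ is a rotation about the third coordinate axis through angle $bt_1$.

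The main subtlety, compared with the planar complex case, is that $U$ is no longer a scalar multiple of the identity: it scales by $e^{at_1}$ in the invariant plane of the complex eigenvalue and by $e^{\lambda t_1}$ along the real eigendirection, so the scaling is genuinely anisotropic. Fortunately, the upper $2\times 2$ block of $U$ is still scalar, so $U$ commutes with $V$ and the decomposition $T=UV=VU$ is unambiguous; the rotation and the scaling can still be separated cleanly. Once this is in hand, the analogue of the comment following Lemma~\ref{Lem 3,4.1}, namely that $X(t)$ and $U^{-1}Y(t)$ are congruent, carries over verbatim via the orthogonality of $V$.
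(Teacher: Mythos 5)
Your proposal is correct and follows essentially the same route as the paper: reduce to the real canonical form, compute $e^{At_1}$ blockwise, and factor $T=UV$ into the diagonal scaling $U$ and the rotation $V$ about the third axis. The only additions beyond the paper's argument are the explicit restatement of the semigroup identity giving $Y(t)=TX(t)$ (which the paper leaves implicit, having proved it in Lemma~\ref{Lem 3,4.1}) and the observation that $U$ and $V$ commute — both harmless and correct.
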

\begin{proof}

(i) If $A$ is in the standard canonical form 
$A=
\begin{bmatrix}
\lambda_1 & 0 & 0\\
0 & \lambda_2 & 0\\
0 & 0 & \lambda_3
\end{bmatrix}
$, then 
\begin{equation*}
T= e^{At_1}  = \begin{bmatrix}
e^{\lambda_1t_1} & 0 & 0\\
0 & e^{\lambda_2t_1} & 0\\
0 & 0 &  e^{\lambda_3t_1}
\end{bmatrix}
\end{equation*}
Here $T$ represents scaling only.
\\(ii) If $A$ is in the standard canonical form 
$A=
\begin{bmatrix}
a & b & 0\\
-b & a & 0\\
0 & 0& \lambda
\end{bmatrix}
$, then A has eigenvalues $a\pm i b$, $\lambda$  and 
\begin{equation*}
\begin{split}
T= e^{At_1} & = \begin{bmatrix}
e^{at_1}\cos(bt_1) & e^{at_1}\sin(bt_1) & 0\\
-e^{at_1}\sin(bt_1) & e^{at_1}\cos(bt_1) & 0\\
0 & 0 & e^{\lambda t_1}
\end{bmatrix}\\[0.05cm]
& = \begin{bmatrix}
e^{at_1} & 0 & 0\\
0 & e^{at_1} & 0\\
0 & 0 & e^{\lambda t_1}
\end{bmatrix}
\begin{bmatrix}
\cos(bt_1) & \sin(bt_1) & 0\\
-\sin(bt_1) & \cos(bt_1) & 0\\
0 & 0 & 1
\end{bmatrix}\\
& = U\cdot V
\end{split}
\end{equation*}
where $U=\begin{bmatrix}
e^{at_1} & 0 & 0\\
0 & e^{at_1} & 0\\
0 & 0 & e^{\lambda t_1}
\end{bmatrix}$ is scaling matrix (Uniform scaling by factor $e^{at_1}$ of $X$, $Y$- coordinates and  scaling of Z-coordinate by $e^{\lambda t_1}$) and $V=\begin{bmatrix}
\cos(bt_1) & \sin(bt_1) & 0\\
-\sin(bt_1) & \cos(bt_1) & 0\\
0 & 0 & 1
\end{bmatrix}$ is rotation matrix (Rotation about $Z$-axis; angle of rotation is $bt_1$).\\
The curves $X(t)$ and $U^{-1}Y(t)$ are congruent.
\end{proof}
\begin{Ex}  \label{Ex 3,4.4}
Consider the two classical systems
 \begin{equation}
\dot{X}(t)=\begin{bmatrix}
1 & 2 & -1\\
0 & 3 & -2\\
0 & 2 & -2
\end{bmatrix}X(t)\label{3.24}
\end{equation}
and
 \begin{equation}
\dot{X}(t)=\begin{bmatrix}
0 & 2 & 0\\
-2 & 0 & 0\\
0 & 0 & -3
\end{bmatrix}X(t)\label{3.25}
\end{equation}
\end{Ex}
In Figure \ref{Fig 8} (a) and \ref{Fig 8} (b) we sketch the solutions of system (\ref{3.24}) and (\ref{3.25}) respectively with initial conditions $X(0)=[
1, 1, 1
]^T$ (Blue color) and $Y(0)=X(1)$ (Red color). It can be checked that both the trajectories follow the same path.
\begin{figure}[h]
               \subfloat[Solution trajectories of system (\ref{3.24})]{\includegraphics[width=0.45\textwidth]{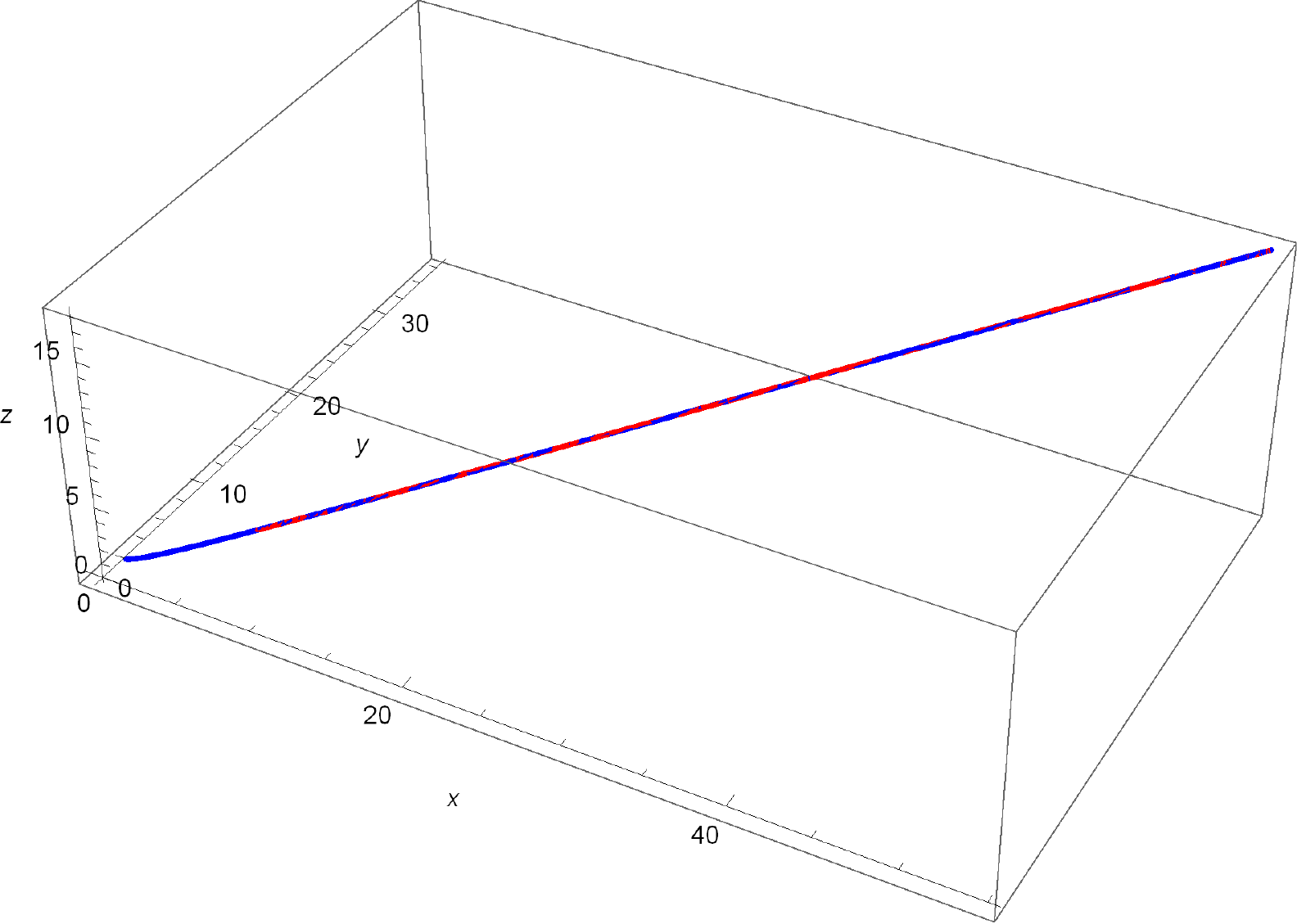}}
               \hfill 
               \subfloat[Solution trajectories of system (\ref{3.25})]{\includegraphics[width=0.4\textwidth]{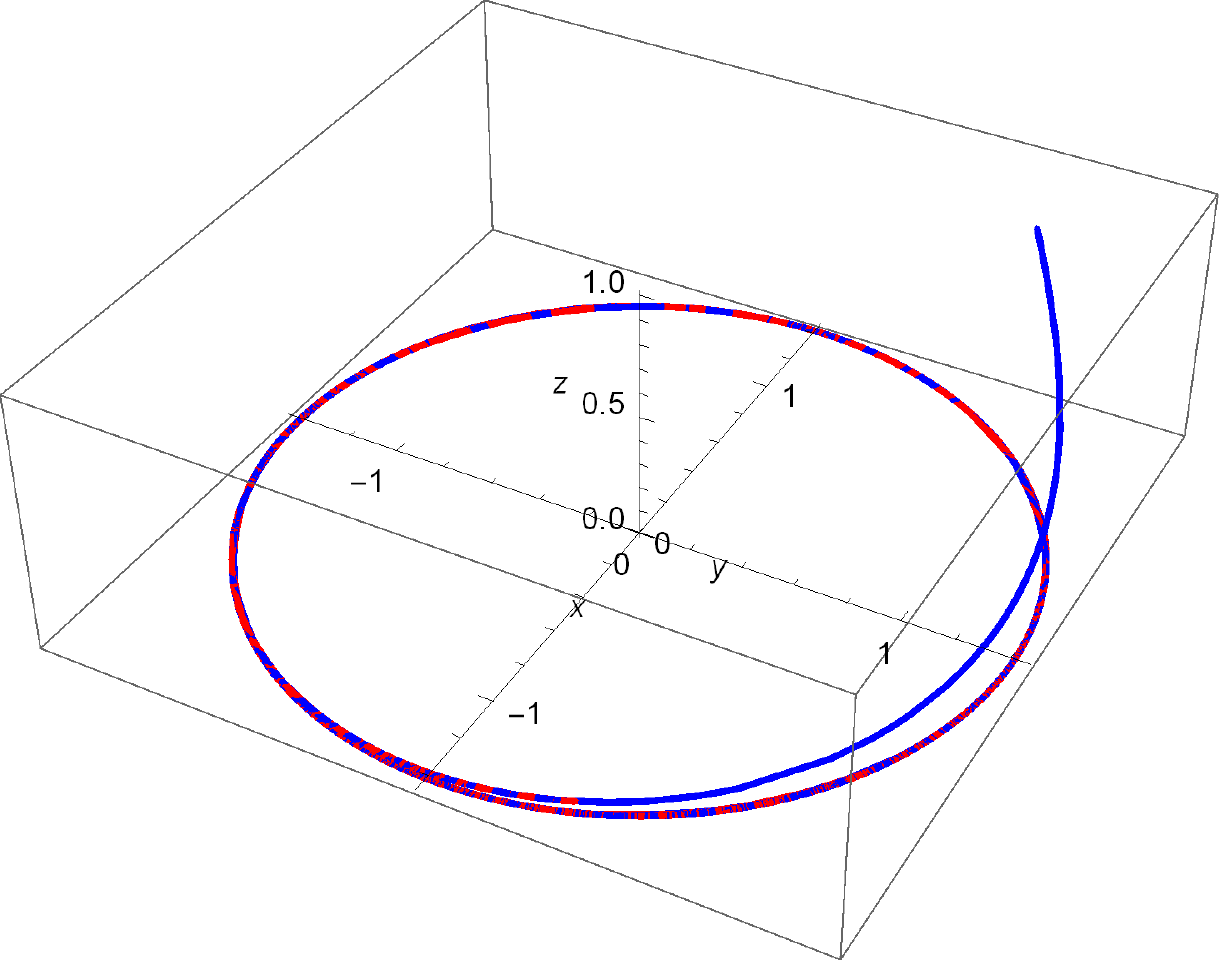}}
               \caption{
               } \label{Fig 8}
             \end{figure}  
\begin{The}\label{Thm 3,4.2}
Consider a system ${}_0^C\mathrm{D}_t^\alpha X(t)=AX(t)$,\,$0<\alpha<1$\, where $A$ is $3 \times 3$ matrix.\\
New trajectory $Y(t)$ starting at some point $X(t_1)$ on original trajectory $X(t)$ is given by the linear transformation
\begin{equation}
Y(t)=TX(t),\label{3.26}
\end{equation}
where $T=E_\alpha(At_1^\alpha)$.\\
(i) If $A$ has real-distinct eigenvalues then $T$ represents scaling (only).\\
(ii) If $A$ has complex conjugate eigenvalues $a\pm ib$ and a real eigenvalue $\lambda$ then $T$ represents both scaling and rotation.
\end{The}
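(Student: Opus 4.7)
The plan is to mirror the proof of Lemma \ref{Lem 3,4.2} in one higher dimension, exploiting the block diagonal structure of the standard canonical form of $A$. First I would apply Theorem \ref{Thm 3,2.1} to obtain $X(t)=E_\alpha(At^\alpha)X_0$ and, taking $Y(0)=X(t_1)=E_\alpha(At_1^\alpha)X_0$, the same theorem yields $Y(t)=E_\alpha(At^\alpha)E_\alpha(At_1^\alpha)X_0$. Because both factors are power series in the same matrix $A$, they commute, so $Y(t)=E_\alpha(At_1^\alpha)\,X(t)=TX(t)$ with $T=E_\alpha(At_1^\alpha)$. Since the qualitative behavior is unchanged by a change of basis, for the structural part I would replace $A$ by its canonical form.

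For Case (i), with $A=\mathrm{diag}(\lambda_1,\lambda_2,\lambda_3)$, direct substitution into the Mittag-Leffler power series (Definition \ref{Def 3,2.3}) gives $T=\mathrm{diag}\bigl(E_\alpha(\lambda_1 t_1^\alpha),E_\alpha(\lambda_2 t_1^\alpha),E_\alpha(\lambda_3 t_1^\alpha)\bigr)$, which is a (possibly non-uniform) scaling along each coordinate axis; the sign and magnitude of each diagonal entry determine the type of scaling along that axis.

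For Case (ii), with canonical block form $A=\mathrm{diag}(B,\lambda)$ where $B=\begin{pmatrix}a & b\\-b & a\end{pmatrix}$, every power $A^k$ remains block diagonal with blocks $B^k$ and $\lambda^k$, so the termwise Mittag-Leffler sum yields the block diagonal $T=\mathrm{diag}\bigl(E_\alpha(Bt_1^\alpha),E_\alpha(\lambda t_1^\alpha)\bigr)$. Using the algebra isomorphism $\begin{pmatrix}x & y\\-y & x\end{pmatrix}\leftrightarrow x+iy$, the $2\times 2$ block equals $\begin{pmatrix}\mathrm{Re}\,E_\alpha((a+ib)t_1^\alpha) & \mathrm{Im}\,E_\alpha((a+ib)t_1^\alpha)\\ -\mathrm{Im}\,E_\alpha((a+ib)t_1^\alpha) & \mathrm{Re}\,E_\alpha((a+ib)t_1^\alpha)\end{pmatrix}$, and this factors (exactly as in Lemma \ref{Lem 3,4.2}) as the scalar $r=|E_\alpha((a+ib)t_1^\alpha)|$ times a planar rotation through angle $\arg E_\alpha((a+ib)t_1^\alpha)$. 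Hence $T=UV$ with $U=\mathrm{diag}(r,r,E_\alpha(\lambda t_1^\alpha))$ the scaling matrix and $V$ the rotation about the $z$-axis; since $V$ is orthogonal, $X(t)$ and $U^{-1}Y(t)$ are congruent.

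I do not anticipate any deep obstacle: the only points requiring care are the commutativity $E_\alpha(At^\alpha)E_\alpha(At_1^\alpha)=E_\alpha(At_1^\alpha)E_\alpha(At^\alpha)$ and the block decomposition of the matrix Mittag-Leffler function, both of which follow from termwise manipulation of the absolutely convergent defining series. The identification of the $2\times 2$ block with the real and imaginary parts of $E_\alpha((a+ib)t_1^\alpha)$ is the same $\mathbb{R}^2\cong\mathbb{C}$ identification used in the planar case, so no new analytical difficulty arises beyond what is already present in the proof of Lemma \ref{Lem 3,4.2}.
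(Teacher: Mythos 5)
Your proposal is correct and follows essentially the same route as the paper: reduce to the standard canonical form, compute $T=E_\alpha(At_1^\alpha)$ blockwise, and factor the complex-eigenvalue block as $UV$ with $U$ a scaling and $V$ a rotation about the $z$-axis via the $\mathbb{R}^2\cong\mathbb{C}$ identification. If anything, you are more explicit than the paper about the commutativity of $E_\alpha(At^\alpha)$ and $E_\alpha(At_1^\alpha)$ and about why the Mittag-Leffler function respects the block structure, which the paper leaves implicit.
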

\begin{proof}
 
(i) If $A$ is in the standard canonical form 
$A=
\begin{bmatrix}
\lambda_1 & 0 & 0\\
0 & \lambda_2 & 0\\
0& 0& \lambda_3
\end{bmatrix}
$, then 
\begin{equation*}
T= E_\alpha(At_1^\alpha)  = \begin{bmatrix}
E_\alpha(\lambda_1 t_1^\alpha) & 0 & 0\\
0 & E_\alpha(\lambda_2 t_1^\alpha) & 0\\
0 & 0 & E_\alpha(\lambda_3 t_1^\alpha)
\end{bmatrix}
\end{equation*}
Here $T$ represents scaling only.
\\(ii) If $A$ is in the standard canonical form 
$A=
\begin{bmatrix}
a & b & 0\\
-b & a & 0\\
0 & 0 & \lambda
\end{bmatrix}
$ then 
\begin{equation*}
\begin{split}
T= e^{At_1} & = \begin{bmatrix}
Re[E_\alpha ((a+ib)t_1^\alpha)] & Im[E_\alpha ((a+ib)t_1^\alpha)] & 0\\
-Im[E_\alpha ((a+ib)t_1^\alpha)] & Re[E_\alpha ((a+ib)t_1^\alpha)] & 0\\
0 & 0 & E_\alpha(\lambda t_1^\alpha)
\end{bmatrix}\\[0.05cm]
& = \begin{bmatrix}
|E_\alpha ((a+ib)t_1^\alpha)| & 0 & 0\\
0 & |E_\alpha ((a+ib)t_1^\alpha)| & 0\\
0 & 0 & E_\alpha(\lambda t_1^\alpha)
\end{bmatrix}
\begin{bmatrix}
\frac{Re[E_\alpha ((a+ib)t_1^\alpha)]}{|E_\alpha ((a+ib)t_1^\alpha)|} & \frac{Im[E_\alpha ((a+ib)t_1^\alpha)]}{|E_\alpha ((a+ib)t_1^\alpha)|} & 0\\
-\frac{Im[E_\alpha ((a+ib)t_1^\alpha)]}{|E_\alpha ((a+ib)t_1^\alpha)|} & \frac{Re[E_\alpha ((a+ib)t_1^\alpha)]}{|E_\alpha ((a+ib)t_1^\alpha)|} & 0\\
0 & 0 & 1
\end{bmatrix} \\
& = U\cdot V
\end{split}
\end{equation*}
where $U=\begin{bmatrix}
|E_\alpha ((a+ib)t_1^\alpha)| & 0 & 0\\
0 & |E_\alpha ((a+ib)t_1^\alpha)| & 0\\
0 & 0 & E_\alpha(\lambda t_1^\alpha)
\end{bmatrix}$ is scaling matrix (Uniform scaling by factor $|E_\alpha ((a+ib)t_1^\alpha)|$ of $X$,  $Y$- coordinates and scaling of Z-coordinate by $E_\alpha(\lambda t_1^\alpha)$) \\and $V=\begin{bmatrix}
\frac{Re[E_\alpha ((a+ib)t_1^\alpha)]}{|E_\alpha ((a+ib)t_1^\alpha)|} & \frac{Im[E_\alpha ((a+ib)t_1^\alpha)]}{|E_\alpha ((a+ib)t_1^\alpha)|} & 0\\
-\frac{Im[E_\alpha ((a+ib)t_1^\alpha)]}{|E_\alpha ((a+ib)t_1^\alpha)|} & \frac{Re[E_\alpha ((a+ib)t_1^\alpha)]}{|E_\alpha ((a+ib)t_1^\alpha)|} & 0\\
0 & 0 & 1
\end{bmatrix} $ is rotation matrix (Rotation about $Z$-axis; angle of rotation is $\theta=\mathrm{cos}^{-1}\left[\frac{Re[E_\alpha ((a+ib)t_1^\alpha)]}{|E_\alpha ((a+ib)t_1^\alpha)|}\right]$).\\
The curves $X(t)$ and $U^{-1}Y(t)$ are congruent.
\end{proof}
{\bf Comments}:- \\
New trajectories are transformed versions of original trajectories. In integer order case, both trajectories follow same path because $e^{A(t_1+t_2)}=e^{At_1}e^{At_2}$.
\par This is not the case with fractional order systems because $E_\alpha (A(t_1+t_2)^\alpha)\ne E_\alpha (At_1^\alpha)E_\alpha (At_2^\alpha)$, in general.

\begin{Ex}\label{Ex 3,4.5}
General solution of,
 \begin{equation}
{}_0^C\mathrm{D}_t^\alpha X(t)=AX(t),\,0<\alpha<1, \,\,where \,\,A=\begin{bmatrix}
1 & 2 & -1\\
0 & 3 & -2\\
0 & 2 & -2
\end{bmatrix}\,\, and \,\, X(0)=\begin{bmatrix}
c_1\\c_2\\c_3
\end{bmatrix}\label{3.27}
\end{equation}
is given by 
\begin{equation}
X(t)=\begin{bmatrix}
c_1E_\alpha(t^\alpha)+2c_2(E_\alpha(2t^\alpha)-E_\alpha(t^\alpha))+c_3(E_\alpha(t^\alpha)-E_\alpha(2t^\alpha))\\
(c_2/3)(4E_\alpha(2t^\alpha)-E_\alpha(-t^\alpha))+(2c_3/3)(E_\alpha(-t^\alpha)-E_\alpha(2t^\alpha))\\
(2c_2/3)(E_\alpha(2t^\alpha)-E_\alpha(-t^\alpha))+(c_3/3)(4E_\alpha(-t^\alpha)-E_\alpha(2t^\alpha))
\end{bmatrix}.\label{3.28}
\end{equation} 
$Y(t)$ be a solution of\, ${}_0^C\mathrm{D}_t^\alpha Y(t)=AY(t)$ with $Y(0)=X(t_1)$, $t_1>0$.\\
In the Figure \ref{Fig 9} (a), we sketch the solution trajectories $X(t)$ (Blue color) of the system (\ref{3.27}) subject to the initial condition $X(0)=[1, 1, 1]^T$ and $Y(t)$ (Red color) with initial condition $Y(0)=X(1)$. 
\end{Ex}
\begin{Ex} \label{Ex 3,4.6}
Repeating the same exercise as in Example \ref{Ex 3,4.5}, with 
 \begin{equation}
A=\begin{bmatrix}
-3 & 0 & 0\\
0 & 3 & -2\\
0 & 1 & 1
\end{bmatrix}\,\, and \,\, X(0)=\begin{bmatrix}
c_1\\c_2\\c_3
\end{bmatrix}\label{3.29}
\end{equation}
we get 
\begin{equation}
X(t)=\begin{bmatrix}
c_1E_\alpha(-3t^\alpha)\\
c_2Re[E_\alpha((2+i)t^\alpha)]+(c_2-2c_3)Im[E_\alpha((2+i)t^\alpha)]\\
c_3Re[E_\alpha((2+i)t^\alpha)]+(c_2-c_3)Im[E_\alpha((2+i)t^\alpha)]
\end{bmatrix}.\label{3.30}
\end{equation} 
In the Figure \ref{Fig 9} (b), we sketch the solution trajectories $X(t)$ (Blue color) of the system (\ref{3.29}) with initial condition $X(0)=[1, 1, 1]^T$ and $Y(t)$ (Red color) subject to the initial condition $Y(0)=X(1)$. 
\end{Ex}
\begin{figure}[h]
               \subfloat[]{\includegraphics[width=0.45\textwidth]{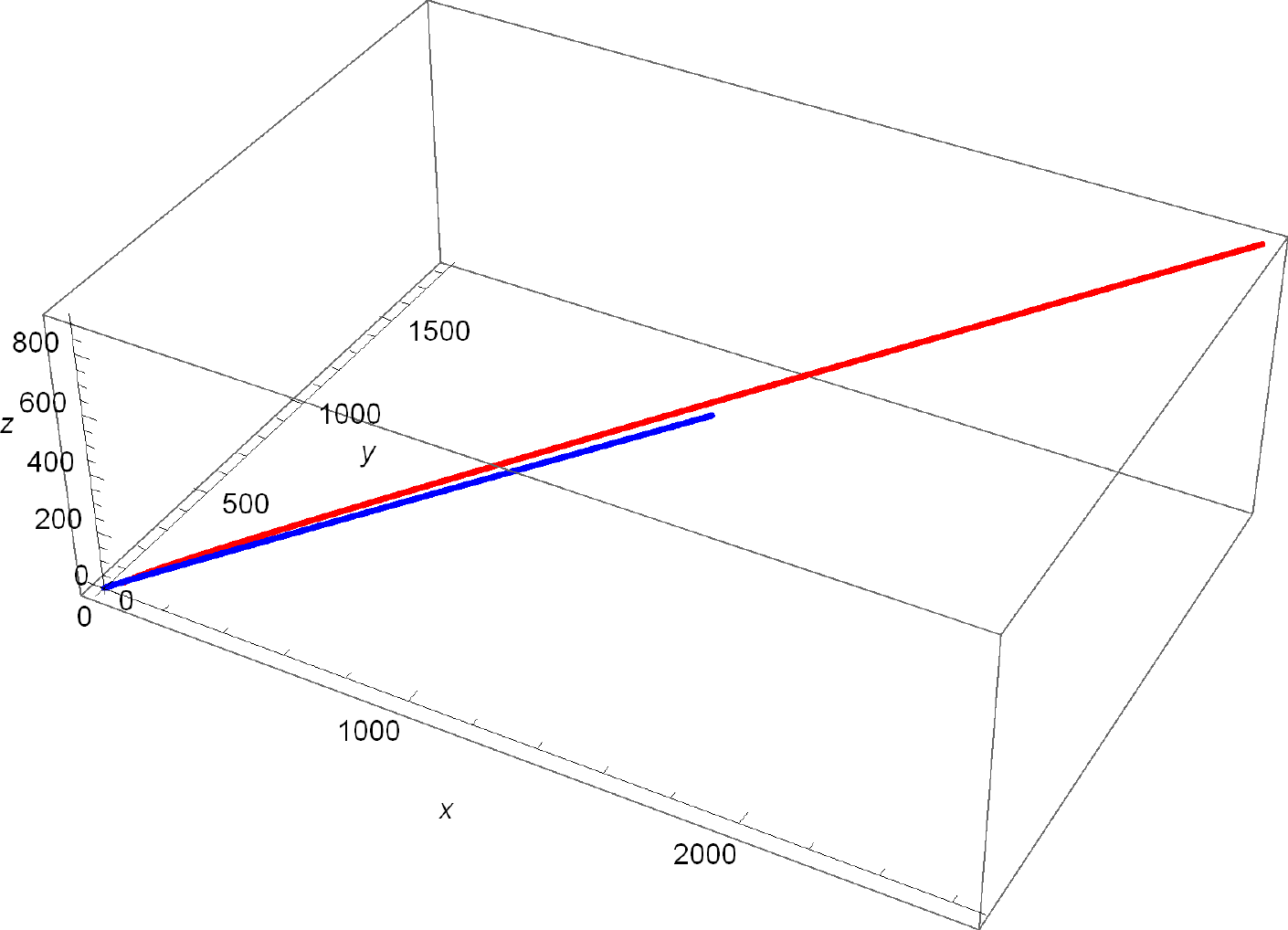}}
               \hfill 
               \subfloat[]{\includegraphics[width=0.3\textwidth]{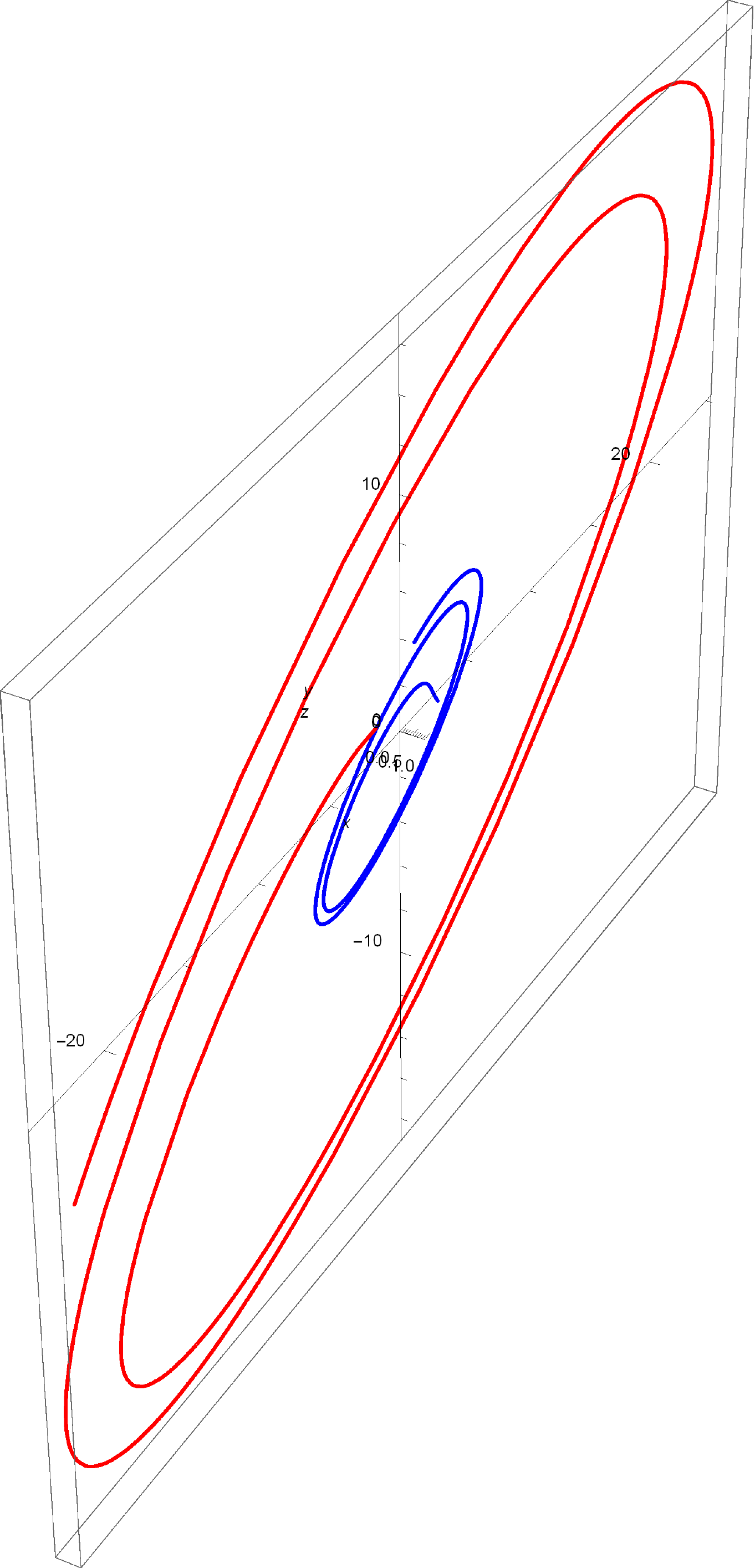}}
               \caption{Trajectories of fractional order systems (\ref{3.27}) and (\ref{3.29}).
               } \label{Fig 9}
             \end{figure}  
\section{Differential geometry of trajectories of fractional order systems}
Frenet apparatus is a tool which is very useful to describe the shape of a curve.
 In this section we find Frenet apparatus for solution trajectories of FDEs 
  \begin{equation}
{}_0^C\mathrm{D}_t^\alpha X(t)=AX(t),\,0<\alpha<1,\, \mathrm{with}\,\, X(0)=X_0=\begin{bmatrix}
 c_1\\
 c_2
 \end{bmatrix},\label{3.31}
 \end{equation}
 where $A$ is in canonical form.\\
 {\bf(1)} Let, $A= \begin{bmatrix}
 \lambda_1 & 0\\
 0 & \lambda_2
 \end{bmatrix}$, where $\lambda_1\ne\lambda_2$ are real numbers.\\
 The Frenet apparatus of solution trajectory of 
 ${}_0^C\mathrm{D}_t^\alpha X(t)=AX(t),\,0<\alpha<1,\, X(0)=(
  c_1, 
  c_2
  )^T$ (respectively ${}_0^C\mathrm{D}_t^\alpha Y(t)=AY(t),\,0<\alpha<1,\, Y(0)=X(t_1),\, t_1>0$) is $T_1, N_1, \kappa_1$ (respectively $T_2, N_2, \kappa_2$).\\
  If $ \nu_1 (t)=\sqrt{(\dot{x})^2+(\dot{y})^2}$, is speed of $X=(x, y)^T$ then
  $$\nu_1(t)= \sqrt{ c_1^2\left(\lambda_1t^{\alpha-1}E_{\alpha, \alpha}(\lambda_1t^\alpha)\right)^2+c_2^2\left(\lambda_2t^{\alpha-1}E_{\alpha, \alpha}(\lambda_2t^\alpha)\right)^2}$$
  Similarly, speed of $Y$ is given by
   $$\nu_2(t)= \sqrt{ c_1^2(E_\alpha(\lambda_1t_1^\alpha))^2\left(\lambda_1t^{\alpha-1}E_{\alpha, \alpha}(\lambda_1t^\alpha)\right)^2+c_2^2(E_\alpha(\lambda_2t_1^\alpha))^2\left(\lambda_2t^{\alpha-1}E_{\alpha, \alpha}(\lambda_2t^\alpha)\right)^2}.$$
   If $u_1= \lambda_1t^{\alpha-1}E_{\alpha, \alpha}(\lambda_1t^\alpha)\left[\frac{\mathrm{d}}{\mathrm{dt}}(\lambda_2t^{\alpha-1}E_{\alpha, \alpha}(\lambda_2t^\alpha))\right]-\lambda_2t^{\alpha-1}E_{\alpha, \alpha}(\lambda_2t^\alpha)\left[\frac{\mathrm{d}}{\mathrm{dt}}(\lambda_1t^{\alpha-1}E_{\alpha, \alpha}(\lambda_1t^\alpha))\right]$ then we have 
  \begin{equation*}
  \kappa_1=\left|\frac{c_1 c_2u_1}{\nu_1^3}\right|\,\, \mathrm{and}\,\, \kappa_2=\left|\frac{c_1 c_2E_{ \alpha}(\lambda_1t_1^\alpha)E_{ \alpha}(\lambda_2t_1^\alpha)u_1}{\nu_2^3}\right|.
  \end{equation*}
  Therefore, $$\kappa_2=\left|\frac{E_{ \alpha}(\lambda_1t_1^\alpha)E_{ \alpha}(\lambda_2t_1^\alpha)\nu_1^3}{\nu_2^3}\right|\kappa_1.$$
The unit tangent vectors
  \begin{equation*}
    T_1=\frac{\left(c_1\lambda_1t^{\alpha-1}E_{\alpha, \alpha}(\lambda_1t^\alpha),c_2\lambda_2t^{\alpha-1}E_{\alpha, \alpha}(\lambda_2t^\alpha)\right)}{\nu_1 (t)}
    \end{equation*}
and 
\begin{equation*}
	T_2=\frac{\left(c_1E_\alpha(\lambda_1t_1^\alpha)\lambda_1t^{\alpha-1}E_{\alpha, \alpha}(\lambda_1t^\alpha),c_2E_\alpha(\lambda_2t_1^\alpha)\lambda_2t^{\alpha-1}E_{\alpha, \alpha}(\lambda_2t^\alpha)\right)}{\nu_2 (t)}.
\end{equation*}
$$
\therefore T_2=\frac{\nu_1 (t)}{\nu_2 (t)}\begin{bmatrix}
E_\alpha(\lambda_1t_1^\alpha) & 0 \\
0 & E_\alpha(\lambda_2t_1^\alpha)
\end{bmatrix}T_1.
$$
Similarly, the unit normal vectors
\begin{equation*}
N_1=\frac{\left(-c_2\lambda_2t^{\alpha-1}E_{\alpha, \alpha}(\lambda_2t^\alpha),c_1\lambda_1t^{\alpha-1}E_{\alpha, \alpha}(\lambda_1t^\alpha)\right)}{\nu_1 (t)}
\end{equation*}
and 
\begin{equation*}
N_2=\frac{\left(-c_2E_\alpha(\lambda_2t_1^\alpha)\lambda_2t^{\alpha-1}E_{\alpha, \alpha}(\lambda_2t^\alpha),c_1E_\alpha(\lambda_1t_1^\alpha)\lambda_1t^{\alpha-1}E_{\alpha, \alpha}(\lambda_1t^\alpha)\right)}{\nu_2 (t)}.
\end{equation*}
$$
\therefore N_2=\frac{\nu_1 (t)}{\nu_2 (t)}\begin{bmatrix}
E_\alpha(\lambda_2t_1^\alpha) & 0 \\
0 & E_\alpha(\lambda_1t_1^\alpha)
\end{bmatrix}N_1.
$$
Note that, if $\lambda_1=\lambda_2=\lambda$ then\,
$
\nu_2(t)=E_\alpha(\lambda t_1^\alpha)\nu_1(t),\,T_2= T_1, \, N_2= N_1, \, \kappa_2= \kappa_1=0.
$\\[0.2cm]   
{\bf Conclusions:}\\
 {\bf(2)} Let, $A= \begin{bmatrix}
  \lambda & 1\\
  0 & \lambda
  \end{bmatrix}$. \\
  (i) In this case, the general solution of the system is, $X(t)=\begin{bmatrix}
  c_1 E_\alpha(\lambda t^\alpha)+c_2\frac{t^\alpha}{\alpha}E_{\alpha,\alpha}(\lambda t^\alpha)\\
  c_2 E_\alpha(\lambda t^\alpha)
  \end{bmatrix}$.\\
 \begin{equation*}
       \begin{split}
      \nu_1 (t)& = \left( (c_1^2+c_2^2)\left(\sum_{n=1}^{\infty}\frac{\lambda^n t^{\alpha n-1}}{\Gamma(\alpha n)}\right)^2+ \frac{c_2^2}{\alpha^2}\left(\sum_{n=1}^{\infty}\frac{\lambda^n (\alpha n+\alpha) t^{\alpha n+\alpha-1}}{\Gamma(\alpha n+\alpha)}\right)^2\right.\\
      & \quad \left.+\frac{2c_1c_2}{\alpha}\sum_{n=1}^{\infty}\frac{\lambda^n t^{\alpha n-1}}{\Gamma(\alpha n)}\sum_{n=1}^{\infty}\frac{\lambda^n (\alpha n+\alpha) t^{\alpha n+\alpha-1}}{\Gamma(\alpha n+\alpha)}\right)^{1/2}.
     \end{split}
       \end{equation*} 
       If $$u_2(t)=\sum_{n=1}^{\infty}\frac{\lambda^n (\alpha n+\alpha)t^{\alpha n+\alpha-1}}{\Gamma(\alpha n+\alpha)}\sum_{n=1}^{\infty}\frac{\lambda^n  t^{\alpha n-2}}{\Gamma(\alpha n-1)}-\sum_{n=1}^{\infty}\frac{\lambda^n (\alpha n+\alpha)t^{\alpha n+\alpha-2}}{\Gamma(\alpha n+\alpha-1)}\sum_{n=1}^{\infty}\frac{\lambda^n t^{\alpha n-1}}{\Gamma(\alpha n)}, $$ then 
   \begin{equation*}
   \kappa_1  = \left|\frac{ \frac{c_2^2}{\alpha}u_2(t)}
   {(\nu_1 (t))^3}\right|,
   \end{equation*}
   \begin{equation*}
     T_1=\frac{\left(c_1\sum_{n=1}^{\infty}\frac{\lambda^n t^{\alpha n-1}}{\Gamma(\alpha n)}+\frac{c_2}{\alpha}\sum_{n=1}^{\infty}\frac{\lambda^n(\alpha n +\alpha)t^{\alpha n+\alpha-1}}{\Gamma(\alpha n +\alpha)},c_2\sum_{n=1}^{\infty}\frac{\lambda^n t^{\alpha n-1}}{\Gamma(\alpha n)}\right)}{\nu_1 (t)}
     \end{equation*}
     and
  \begin{equation*}
      N_1=\frac{\left(-c_2\sum_{n=1}^{\infty}\frac{\lambda^n t^{\alpha n-1}}{\Gamma(\alpha n)},c_1\sum_{n=1}^{\infty}\frac{\lambda^n t^{\alpha n-1}}{\Gamma(\alpha n)}+\frac{c_2}{\alpha}\sum_{n=1}^{\infty}\frac{\lambda^n(\alpha n +\alpha)t^{\alpha n+\alpha-1}}{\Gamma(\alpha n +\alpha)}\right)}{\nu_1 (t)}.
      \end{equation*} 
      
   Similarly,    $Y(t)=\begin{bmatrix}
    (c_1 E_\alpha(\lambda t_1^\alpha)+c_2\frac{t_1^\alpha}{\alpha}E_{\alpha,\alpha}(\lambda t_1^\alpha)) E_\alpha(\lambda t^\alpha)+c_2\frac{t^\alpha}{\alpha}E_\alpha(\lambda t_1^\alpha)E_{\alpha,\alpha}(\lambda t^\alpha)\\
    c_2 E_\alpha(\lambda t_1^\alpha) E_\alpha(\lambda t^\alpha)
    \end{bmatrix}$ and\\
 \begin{equation*}
        \begin{split}
              \nu_2 (t)& = \left((E_\alpha(\lambda t_1^\alpha)^2 \left[(c_1^2+c_2^2)\left(\sum_{n=1}^{\infty}\frac{\lambda^n t^{\alpha n-1}}{\Gamma(\alpha n)}\right)^2+ \frac{c_2^2}{\alpha^2}\left(\sum_{n=1}^{\infty}\frac{\lambda^n (\alpha n+\alpha) t^{\alpha n+\alpha-1}}{\Gamma(\alpha n+\alpha)}\right)^2\right.\right.\\
              & \quad \left.\left.+\frac{2c_1c_2}{\alpha}\sum_{n=1}^{\infty}\frac{\lambda^n t^{\alpha n-1}}{\Gamma(\alpha n)}\sum_{n=1}^{\infty}\frac{\lambda^n (\alpha n+\alpha) t^{\alpha n+\alpha-1}}{\Gamma(\alpha n+\alpha)}\right]
              +\frac{t_1^\alpha}{\alpha}E_{\alpha,\alpha}(\lambda t_1^\alpha)E_\alpha(\lambda t_1^\alpha)\sum_{n=1}^{\infty}\frac{\lambda^n t^{\alpha n-1}}{\Gamma(\alpha n)}\right.\\
    & \quad \left.\left[2c_1c_2\sum_{n=1}^{\infty}\frac{\lambda^n t^{\alpha n-1}}{\Gamma(\alpha n)}+\frac{2c_2^2}{\alpha}\sum_{n=1}^{\infty}\frac{\lambda^n (\alpha n+\alpha) t^{\alpha n+\alpha-1}}{\Gamma(\alpha n+\alpha)} \right] +\frac{c_2^2t_1^{2\alpha}}{\alpha^2}(E_{\alpha,\alpha}(\lambda t_1^\alpha))^2\left(\sum_{n=1}^{\infty}\frac{\lambda^n t^{\alpha n-1}}{\Gamma(\alpha n)}\right)^2        
              \right)^{1/2}.
             \end{split}
         \end{equation*}      
     \begin{equation*}
 \therefore    \kappa_2  = \left|\frac{ (\nu_1 (t))^3}
        {(\nu_2 (t))^3}E_\alpha(\lambda t_1^\alpha)\right|\kappa_1,
     \end{equation*}
     \begin{equation*}
       T_2=\frac{\nu_1 (t)E_\alpha(\lambda t_1^\alpha)}{\nu_2 (t)}T_1 +\frac{t_1^\alpha E_{\alpha,\alpha}(\lambda t_1^\alpha)}{\nu_2(t)}V,\quad \mathrm{where}\,\, V=\left(\frac{c_2}{\alpha}\sum_{n=1}^{\infty}\frac{\lambda^n t^{\alpha n-1}}{\Gamma(\alpha n)}, 0\right) \,\,\mathrm{and}
       \end{equation*}
    \begin{equation*}
        N_2=\frac{\nu_1 (t)E_\alpha(\lambda t_1^\alpha)}{\nu_2 (t)}N_1 + \frac{t_1^\alpha E_{\alpha,\alpha}(\lambda t_1^\alpha)}{\nu_2(t)}W,\quad \mathrm{where}\,\,  W=\left(0, \frac{c_2}{\alpha}\sum_{n=1}^{\infty}\frac{\lambda^n t^{\alpha n-1}}{\Gamma(\alpha n)}\right).
        \end{equation*} 
        
  {\bf(4)} Let, $A= \begin{bmatrix}
   a & b\\
   -b & a
   \end{bmatrix}$. \\
   (i) In this case, the general solution of the system is,  $X(t)=\begin{bmatrix}
  c_1 Re[E_\alpha ((a+ib)t^\alpha)] + c_2Im[E_\alpha ((a+ib)t^\alpha)] \\
   -c_1Im[E_\alpha ((a+ib)t^\alpha)] + c_2Re[E_\alpha ((a+ib)t^\alpha)] 
   \end{bmatrix}$.\\
   Let, $$c_\alpha(a,b,t)=\sum_{n=1}^{\infty}\frac{(Re(a+ib)^n )t^{\alpha n-1}}{\Gamma(\alpha n)},\, s_\alpha(a,b,t)=\sum_{n=1}^{\infty}\frac{(Im(a+ib)^n )t^{\alpha n-1}}{\Gamma(\alpha n)} \,\,\mathrm{and} $$
   $$
   u_3(t)=(c_1^2+c_2^2)\left(-c_\alpha(a,b,t)\sum_{n=1}^{\infty}\frac{(Im(a+ib)^n ) (\alpha n-1) t^{\alpha n-2}}{\Gamma(\alpha n)}+s_\alpha(a,b,t)\sum_{n=1}^{\infty}\frac{(Re(a+ib)^n ) (\alpha n-1) t^{\alpha n-2}}{\Gamma(\alpha n)}\right).
   $$
 \begin{equation*}
 \therefore      \nu_1 (t)= \sqrt{c_1^2+c_2^2}\left( \left(c_\alpha(a,b,t)\right)^2+\left(s_\alpha(a,b,t)\right)^2\right)^{1/2},
        \end{equation*} 
    \begin{equation*}
    \kappa_1=\left|\frac{u_3(t)}{(\nu_1 (t))^3}\right|,
    \end{equation*}
    \begin{equation*}
      T_1=\frac{1}{\nu_1 (t)}\left(c_1c_\alpha(a,b,t)+c_2s_\alpha(a,b,t),\,-c_1s_\alpha(a,b,t)+c_2c_\alpha(a,b,t)\right) \,\,\mathrm{and}
      \end{equation*}
   \begin{equation*}
       N_1=\frac{1}{\nu_1 (t)}\left(c_1s_\alpha(a,b,t)-c_2c_\alpha(a,b,t),\, c_1c_\alpha(a,b,t)+c_2s_\alpha(a,b,t)\right).
       \end{equation*} 
    Similarly,    $$Y(t)=\begin{bmatrix}
     p Re[E_\alpha ((a+ib)t^\alpha)] + q Im[E_\alpha ((a+ib)t^\alpha)] \\
        -pIm[E_\alpha ((a+ib)t^\alpha)] + qRe[E_\alpha ((a+ib)t^\alpha)], 
     \end{bmatrix}$$
     where $p=c_1 Re[E_\alpha ((a+ib)t_1^\alpha)] + c_2Im[E_\alpha ((a+ib)t_1^\alpha)]$ and $q=-c_1Im[E_\alpha ((a+ib)t_1^\alpha)] + c_2Re[E_\alpha ((a+ib)t_1^\alpha)]$.
 \begin{equation*}
           \begin{split}
       \therefore  \nu_2 (t)& = \sqrt{c_1^2+c_2^2}\,\,|E_\alpha ((a+ib)t_1^\alpha)|\left( \left(c_\alpha(a,b,t)\right)^2+\left(s_\alpha(a,b,t)\right)^2\right)^{1/2}\\
         & = |E_\alpha ((a+ib)t_1^\alpha)|\, \nu_1(t),
        \end{split}  
          \end{equation*}  
      \begin{equation*}
      \kappa_2=\frac{1}{|E_\alpha ((a+ib)t_1^\alpha)|}\kappa_1,
      \end{equation*}
      \begin{equation*}
        T_2=\frac{Re[E_\alpha ((a+ib)t_1^\alpha)]\nu_1(t)}{\nu_2(t)}T_1 - \frac{Im[E_\alpha ((a+ib)t_1^\alpha)]\nu_1(t)}{\nu_2(t)}N_1 \,\,\mathrm{and}
        \end{equation*}
     \begin{equation*}
         N_2=\frac{Re[E_\alpha ((a+ib)t_1^\alpha)]\nu_1(t)}{\nu_2(t)}N_1 + \frac{Re[E_\alpha ((a+ib)t_1^\alpha)]\nu_1(t)}{\nu_2(t)}T_1.
         \end{equation*}  
 {\bf Comment:} Speed of the new curve is affected by scaling factor.                     

\section{Bifurcation Analysis}
 Since the new trajectory $Y(t)$ starting at some point $X(t_1)$ on original trajectory is a transformation of solution $X(t)$ of ${}_0^C\mathrm{D}_t^\alpha X(t)=\begin{bmatrix}
a & b\\
-b & a
\end{bmatrix}X(t),\, 0<\alpha<1$, it is worth studying the effect of fractional order on such transformations.\\
(i) Fix $t_1=1$, $a=0.983469$ and $b=0.181075$.\\
\begin{figure}[h]
   \begin{center}
             \includegraphics[width=0.5\textwidth]{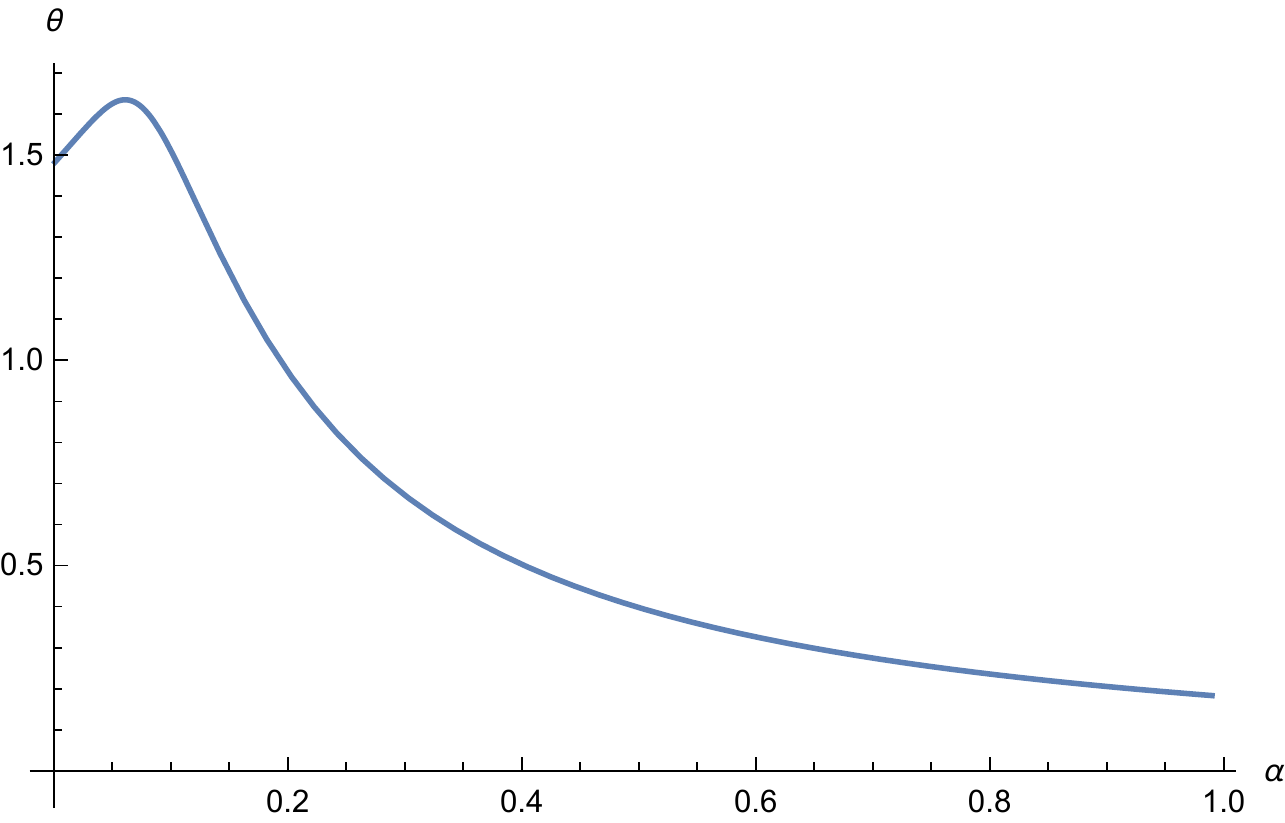}
            \caption{Angle of rotation $\theta$ versus $\alpha$. }         
   \label{Fig 6.1}
   \end{center}  
          \end{figure}  
In the graph of $\theta$,  there is local maximum at $\alpha=0.06144$ as shown in the Figure \ref{Fig 6.1}.\\
(ii) In Figure \ref{Fig 6.2} we fix $a=1$ and sketch the surface $(\alpha, b, \theta)$.\\
\begin{figure}[h]
   \begin{center}
             \includegraphics[width=0.7\textwidth]{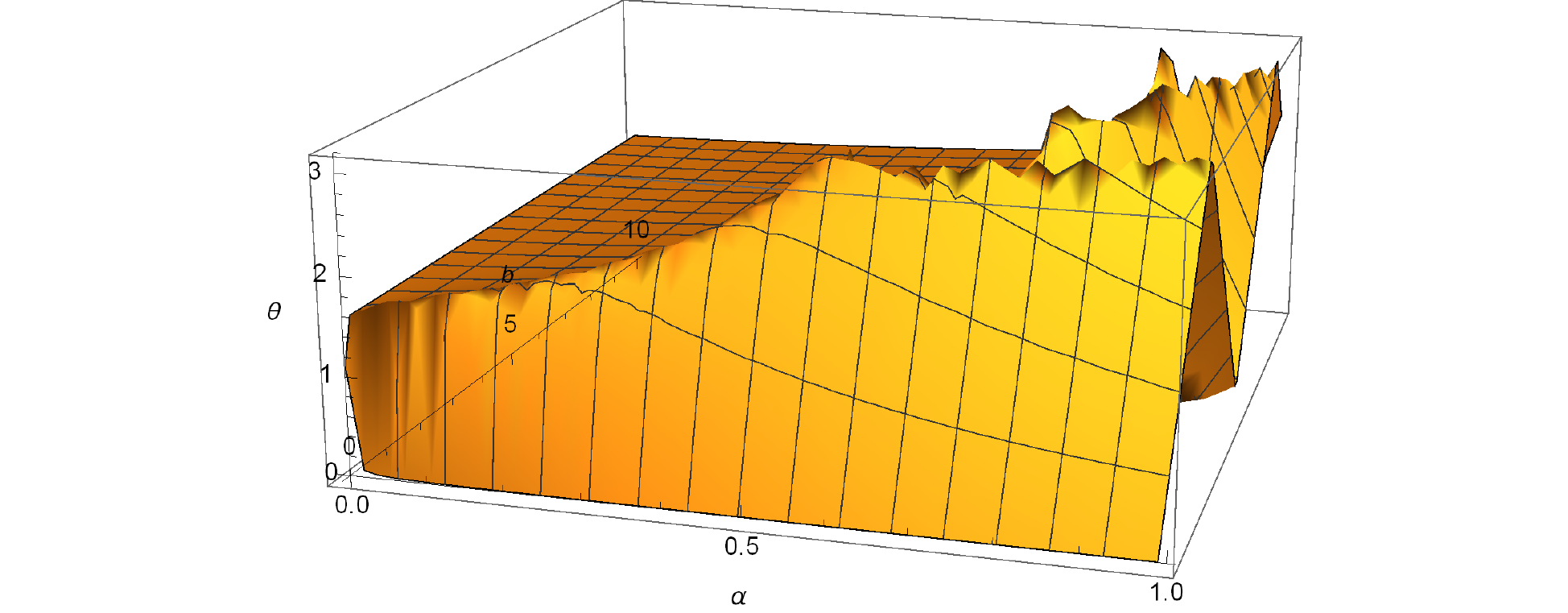}
            \caption{surface $(\alpha, b, \theta)$ for $0<\alpha<1$.}         
   \label{Fig 6.2}
   \end{center}  
          \end{figure}
It is observed that angle of rotation $\theta$ is having maxima at some values of parameters $b$ and $\alpha$.\\
In the Figure \ref{Fig 6.3}, we sketch a parametric curve of maximum ($\theta$) for different values of $\alpha$ and $b$.\\
\begin{figure}[h]
   \begin{center}
             \includegraphics[width=0.5\textwidth]{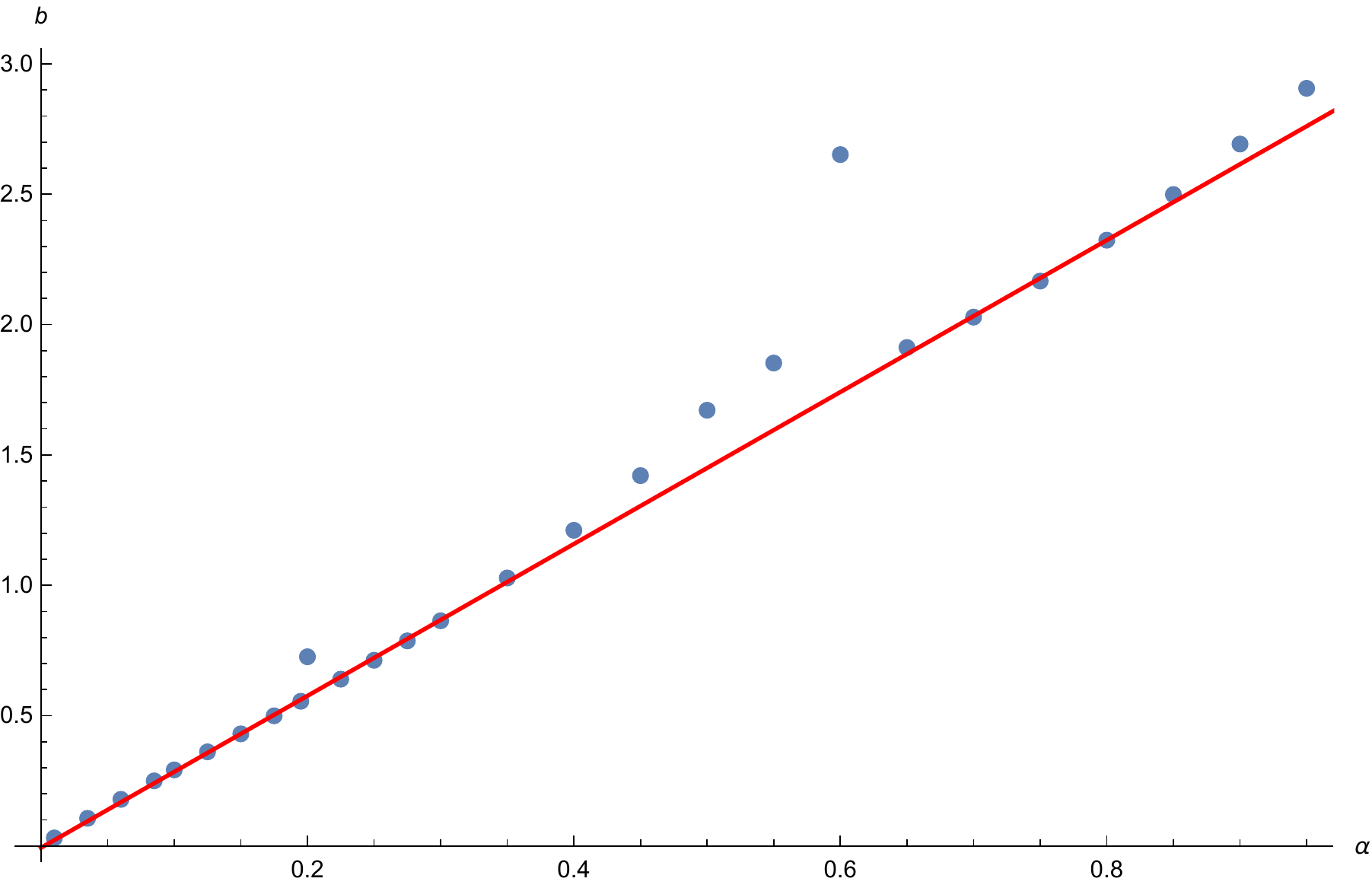}
            \caption{$max(\theta)$ shown by doted curve and it is approximated by a straight line (\ref{3.33}) (red color).}         
   \label{Fig 6.3}
   \end{center}  
          \end{figure}
It can be checked that most of the points of maximum ($\theta$) lie on a straight line \begin{equation}b=-0.0066+2.9128\alpha. \label{3.33}\end{equation}

\section{Conclusion}
The systems of fractional differential equations are not the dynamical systems in a classical sense. The solution $\phi_t(X_0)$ of fractional order initial value problem ${}_0^C\mathrm{D}_t^\alpha X=f(X)$, $X(0)=X_0$ does not satisfy the property $\phi_t\circ\phi_s=\phi_{t+s}$ of flow of classical differential equation. However, the two trajectories $\phi_t(X_0)$ and $\phi_{t+s}(X_0)$ are closely related if we take $f(X)=AX$, a linear function.
\par In this article, we have shown that the new trajectory is a linear transformation of original one. Further, we provided analysis of such trajectories with the help of Frenet appartus.
\section*{Acknowledgment}
S. Bhalekar acknowledges  the Science and Engineering Research Board (SERB), New Delhi, India for the Research Grant (Ref. MTR/2017/000068) under Mathematical Research Impact Centric Support (MATRICS) Scheme. M. Patil acknowledges Department of Science and Technology (DST), New Delhi, India for INSPIRE Fellowship (Code-IF170439).

\end{document}